\newcommand{\be}{\begin{equation}}
\newcommand{\ee}{\end{equation}}
\newcommand{\ben}{\begin{eqnarray*}}
	\newcommand{\een}{\end{eqnarray*}}
\newtheorem{examp}{\sc Example}
\newtheorem{remk}{\sc Remark}
\newtheorem{corol}{\sc Corollary}
\newtheorem{lemma}{\sc Lemma}
\newtheorem{theorem}{\sc Theorem}
\newtheorem{defn}{\sc Definition}
\newcommand{\bt}{\begin{theorem}}
	\newcommand{\et}{\end{theorem}}
\newcommand{\bl}{\begin{lemma}}
	\newcommand{\el}{\end{lemma}}
\newcommand{\bed}{\begin{defn}}
	\newcommand{\eed}{\end{defn}}
\newcommand{\brem}{\begin{remk}}
	\newcommand{\erem}{\end{remk}}
\newcommand{\bex}{\begin{examp}}
	\newcommand{\eex}{\end{examp}}
\newcommand{\bcl}{\begin{corol}}
\newcommand{\ecl}{\end{corol}}
\newcommand{\NI}{\noindent}
\newcommand{\al}{\alpha}
\newcommand{\raro}{\rightarrow}
\newcommand{\dsp}{\displaystyle}
\newcommand{\vsp}{\vskip 0.5em}
\theoremstyle{definition}
\theoremstyle{remark}
\numberwithin{equation}{section}
\numberwithin{theorem}{section}
\numberwithin{lemma}{section}
\begin{document}
\title {On Column Competent Matrices and Linear Complementarity Problem }
\author{ A. Dutta$^{a, 1}$, R. Jana$^{b, 2}$,  A. K. Das$^{b}$\\
	\emph{\small $^{a}$Jadavpur University, Kolkata , 700 032, India.}\\	
	\emph{\small $^{b}$Indian Statistical Institute, 203 B. T.
		Road, Kolkata, 700 108, India.}\\
		\emph{\small $^{1}$Email: aritradutta001@gmail.com}}

\date{}
\maketitle

\begin{abstract}
We revisit the class of column competent matrices and study some matrix theoretic properties of this class. The local $w$-uniqueness of the solutions to the linear complementarity problem can be identified by the column competent matrices. We establish some new results on $w$-uniqueness properties in connection with column competent matrices. These results are significant in the context of matrix theory as well as algorithms in operations research. We prove some results in connection with locally $w$-uniqueness property of column competent matrices. Finally we establish a connection between column competent matrices and column adequate matrices with the help of degree theory.

\NI{\bf keywords:} {Linear complementarity problem \and Column competent matrices \and $W$-uniqueness \and Column adequate matrices.}\\
\end{abstract}
\footnotetext[1]{Corresponding author}
\footnotetext[2]{The author R.Jana presently working in an integrated steel plant}

\section{Introduction}
The $w$-uniqueness property is important in the context of dynamical systems under smooth unilateral constraints. Xu \cite{xu} introduced the column competent matrices. On uniqueness, quite a large number of results are available in the literature of operations research. The study of uniqueness property of the solution is important in the context of the theory of the complementarity system as well as the method applied for finding the solution. For details see (\cite{samelson1958partition}, \cite{neogy2}, \cite{das2017finiteness},  \cite{mohan2001classes}). Ingleton \cite{ingleton1966probelm} studied the $w$-uniqueness solutions to linear complementarity problem in the context of adequate matrices. 

The linear complementarity problem can be stated as follows:
For $A\in R^{n\times n}$ and  a vector  $\,q\,\in\,R^{n},\,$ the {\it linear complementarity problem} denoted as LCP$(q,A)$ finds the solution $w\;\in R^{n}\;$ and  $z\;\in R^{n}\;$ to the following systems 
\be \label{lcp1}
\dsp {w \ - \ A z \ =\ q \ ; \ z \geq 0 \ ;w \geq 0}
\ee
\be \label{lcp2}
\dsp {w^{T}  z \ = \ 0}
\ee
or show that there does not exist any $z\;\in R^{n}\;$ and $w\;\in R^{n}\;$ satisfying the system of linear inequalities (\ref{lcp1}) and complementary condition (\ref{lcp2}).

Pang \cite{pang1988two} studied local $z$-uniqueness of solutions of a linear complementarity problem. The LCP$(q, A)$ has unique $z$-solution for all $q \in R^n$ iff $A$ is a $P$-matrix \cite{cps}. The $w$-uniqueness property is identified by a condition on $A$ related to the notion of sign-reversing. Motivated by the $w$-uniqueness results, we consider column competent matrices in the context of LCP$(q, A).$ The sufficient matrices capture many properties of positive semi definite matrices. The aim of this article is to study some matrix theoretic properties of this class and establish some new results which are useful to the solution of the LCP$(q, A).$

The paper is organised as follows. In section 2, we include few related notations and results. Section 3 presents some new results related to column competent matrices. We develop several matrix theoretic results of column competent matrices which are related to solution of linear complementarity problem. Section 4 provides a conclusion about the article.

\section{Preliminaries}

\noindent Here any vector $z\in R^{n}$ is a column vector and $z^{T}$ is the row transpose of $z.$ We write $z = z^{+} - z^{-}$ where $z^{+}_{i} = \max(0, z_i)$ and $z^{-}_{i} = \max(0, -z_i)$ for any index $i.$ If $A$ is a matrix of order $n,$ $\alpha \subseteq \{1, 2, \cdots, n\}$ and ${\beta} \subseteq \{1, 2, \cdots, n\} \setminus \alpha$ then $A_{\alpha \beta}$ is the submatrix with the rows and columns of $A$ whose indices are in $\al$ and $\beta$ respectively. A principal submatrix and a principal minor of $A$  are denoted by  $A_{\alpha \alpha}$ and $\det A_{\alpha \alpha}$ respectively. For $A \in R^{n \times n}$ and $q \in R^n,$  the feasible set  of LCP$(q, A)$ is defined by FEA$(q, A)$ $= \{z \in R^n : z \geq 0, q + Az \geq 0\}$ and the solution set is also defined by SOL$(q, A)$ $=\{z \in \text{FEA}(q, A) : z^T(q + Az) = 0\}.$  A $z$-solution, $\tilde{z}$ is called locally unique if $\exists$ a neighborhood of $\tilde{z}$ within which $\tilde{z}$ is the only $z$-solution.  A $w$-solution, $\tilde{w}=A\tilde{z}+q$, is called locally unique if $\exists$ a neighborhood of $\tilde{w}$ in which $\tilde{w}$ is the only $w$-solution.  Let $\psi:R^n \to R^n$ and the kernel of the function $\psi$ is defined by ker \ $\psi=\{z\in R^n: \psi(z)=0\}.$ The kernel of a matrix $A \in R^{n \times n}$ is defined by ker\ $A$$=\{z\in R^n:Az=0\}.$ Now we define the column competent matrix.
\begin{defn}\cite{xu}
	The matrix $A$ is said to be
	column competent if
	$z_i(Az)_i=0 ,\  \ i =1,2,\cdots,n  \implies Az=0.$
	
\end{defn}
Column competent matrices can be singular or nonsingular matrices. Note that all singular matrices need not be column competent matrices. Consider $A=$ $\left[\begin{array}{rr}
1 & 0 \\
1 & 0  \\ 
\end{array}\right]$ which is a singular matrix. For any  $z=\left[\begin{array}{rr} 
0  \\
k   \\
\end{array}\right], \ k \in R, \   z_i(Az)_i=0, \ i =1,2 $ implies that $Az=0.$ Consider another $A=$ $\left[\begin{array}{rr}
1 & 1 \\
0 & 0  \\ 
\end{array}\right].$ It is easy to show that $z_i(Az)_i=0,  \ i =1,2 $ does not imply $Az=0.$ Hence $A$ is not a column competent matrix. Let  $A=$ $\left[\begin{array}{rrr} 
1 & 4 & 3 \\
2 & 1 & 5  \\
3 & 2 & 0\\
\end{array}\right],$ for $z= \left[\begin{array}{rrr} 
0  \\
0   \\
1 \\
\end{array}\right]$ $z_i(Az)_i=0, \  \ i=1,2,3$ but $Az=\left[\begin{array}{rrr} 
3  \\
5   \\
0 \\
\end{array}\right]\neq 0.$ Here $A$ is a nonsingular matrix but not a  column competent matrix.\\

Now we define $\psi:R^n \to R^n$ where $\psi(z)=z*(Az)$ and $z*(Az)$ is the Hadamard product defined by $(z*(Az))_i=z_i * (Az)_i , \ \forall \ i.$ Note that the product is associative, distributive and commutative.
\begin{defn}\cite{xu}
	In view of Hadamard product, a matrix $A$ is said to be
	column competent if $\text{ker}\ \psi=\text{ker\ }A.$
\end{defn}
Column adequate matrices are related to column competent matrices. We start with definition of column adequate matrices.
\begin{defn}\cite{cps}
	The matrix $A$ is said to be
	column adequate if
	$z_i(Az)_i \leq 0 , \ i =1,2,\cdots,n  \implies Az = 0.$
\end{defn} 
We state the following lemma and theorems which are useful for the subsequent sections.
\begin{lemma}\cite{xu} \label{lem1}
	The matrix $A$ is said to be non-degenerate if and only if $\text{ker} \ \psi = \{0\}.$
\end{lemma}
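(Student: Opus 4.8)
The plan is to prove the two implications separately, using the standard fact that $A$ is \emph{non-degenerate} precisely when every principal minor $\det A_{\alpha\alpha}$ (for $\emptyset \neq \alpha \subseteq \{1,2,\dots,n\}$) is nonzero. The organizing observation is that, for $z \in R^n$ with support $\alpha(z) = \{i : z_i \neq 0\}$, membership $z \in \text{ker}\ \psi$ means exactly $z_i(Az)_i = 0$ for every $i$, and on the support this forces $(Az)_i = 0$, while off the support the coordinate of $z$ is already zero. So everything reduces to relating $Az$ restricted to $\alpha(z)$ with the principal submatrix $A_{\alpha(z)\,\alpha(z)}$.

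For the direction ``$\text{ker}\ \psi = \{0\} \Rightarrow A$ non-degenerate'' I would argue by contraposition. Suppose $A$ is degenerate, so there is a nonempty $\alpha$ with $\det A_{\alpha\alpha} = 0$; then $A_{\alpha\alpha}$ is singular and there is a nonzero $x \in R^{|\alpha|}$ with $A_{\alpha\alpha}x = 0$. Lift $x$ to $z \in R^n$ by putting the entries of $x$ in the coordinates indexed by $\alpha$ and zeros elsewhere. Then for $i \in \alpha$ we get $(Az)_i = (A_{\alpha\alpha}x)_i = 0$, and for $i \notin \alpha$ we have $z_i = 0$; in either case $z_i(Az)_i = 0$, so $z \in \text{ker}\ \psi$ with $z \neq 0$, i.e. $\text{ker}\ \psi \neq \{0\}$.

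For the direction ``$A$ non-degenerate $\Rightarrow \text{ker}\ \psi = \{0\}$'', take any $z \in \text{ker}\ \psi$ and set $\alpha = \alpha(z)$. For $i \in \alpha$, $z_i \neq 0$ together with $z_i(Az)_i = 0$ gives $(Az)_i = 0$; since $z_j = 0$ for $j \notin \alpha$, this says $(A_{\alpha\alpha}\,z_\alpha)_i = 0$ for all $i \in \alpha$, where $z_\alpha$ is the restriction of $z$ to $\alpha$ and has all entries nonzero. If $\alpha$ were nonempty, $z_\alpha \neq 0$ would make $A_{\alpha\alpha}$ singular, contradicting $\det A_{\alpha\alpha} \neq 0$. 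Hence $\alpha = \emptyset$, so $z = 0$.

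The proof is short, so there is no genuine analytic obstacle; the one place to be careful is the bookkeeping between the support $\alpha(z)$, the principal submatrix $A_{\alpha\alpha}$, and the subvector $z_\alpha$ — specifically, checking that on the support $Az$ coincides with $A_{\alpha\alpha}z_\alpha$, and that one is allowed to use an arbitrary index set $\alpha$ rather than only a leading block. Getting that indexing exactly right is the main thing to verify.
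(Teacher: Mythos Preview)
Your argument is correct and is the standard proof of this characterization. Note, however, that the paper does not actually supply its own proof of this lemma: it is merely stated (with a citation to Xu) in the preliminaries section and used later, so there is no proof in the paper to compare against.
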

\begin{theorem} \label{thm1} \cite{xu}
	The following statements are equivalent.
	\begin{enumerate}[(i)]
		\item $A$ is column competent.
		\item For all vector $q,$ the LCP$(q, A)$ has a finite number (possibly zero) of $w$-solutions.
		\item For all vector $q,$ any $w$-solution of the LCP$(q, A),$ if it exists, must be locally $w$-unique.
	\end{enumerate}
\end{theorem}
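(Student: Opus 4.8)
The plan is to establish the cyclic chain of implications (i) $\Rightarrow$ (ii) $\Rightarrow$ (iii) $\Rightarrow$ (i), with the real work in the first and last steps. For (i) $\Rightarrow$ (ii), I would begin from the elementary index-set decomposition of the solution set: for each $\alpha \subseteq \{1,2,\cdots,n\}$ set
\[ F_\alpha = \{\, z \in R^n : z \ge 0,\ z_i = 0 \text{ for } i \notin \alpha,\ Az+q \ge 0,\ (Az+q)_i = 0 \text{ for } i \in \alpha \,\}. \]
A one-line check gives $\text{SOL}(q,A) = \bigcup_{\alpha} F_\alpha$: every $z \in F_\alpha$ satisfies $z^T(Az+q)=0$ term by term, and conversely a solution $z^{*}$ lies in $F_\alpha$ with $\alpha = \{i : z_i^{*}>0\}$ by complementarity. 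Hence the set of $w$-solutions equals $\bigcup_\alpha (AF_\alpha + q)$. The decisive observation is that column competence collapses each $AF_\alpha$ to at most one point: if $z^{1},z^{2}\in F_\alpha$ and $z = z^{1}-z^{2}$, then $z_i = 0$ for $i\notin\alpha$ while $(Az)_i = (Az^{1}+q)_i-(Az^{2}+q)_i = 0$ for $i\in\alpha$, so $z_i(Az)_i = 0$ for every $i$, and Definition 2.1 forces $Az = 0$, i.e.\ $Az^{1}=Az^{2}$. Thus $\text{LCP}(q,A)$ has at most $2^{n}$ distinct $w$-solutions.

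The step (ii) $\Rightarrow$ (iii) is immediate: a point of a finite subset of $R^n$ is isolated in that set, hence locally $w$-unique in the sense fixed in Section 2. For (iii) $\Rightarrow$ (i) I would argue contrapositively. Suppose $A$ is not column competent and fix $z^{0}$ with $z_i^{0}(Az^{0})_i = 0$ for all $i$ but $Az^{0}\neq 0$. Put $\alpha = \{i : z_i^{0}\neq 0\}$ and $\beta = \{i : z_i^{0}=0\}$; then $(Az^{0})_i = 0$ on $\alpha$, while $(Az^{0})_j \neq 0$ for some $j \in \beta$ (both $\alpha$ and $\beta$ are necessarily nonempty). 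Next I would choose a base vector $z^{*}$ with $z_i^{*}>0$ on $\alpha$ and $z_i^{*}=0$ on $\beta$, and define $q$ by $q_i = -(Az^{*})_i$ for $i\in\alpha$ and $q_i = 1 - (Az^{*})_i$ for $i\in\beta$.

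The claim to verify is then that $z(t) = z^{*} + t\,z^{0}$ solves $\text{LCP}(q,A)$ for all $t$ in a small interval $(-\delta,\delta)$. Feasibility $z(t)\ge 0$ and $Az(t)+q \ge 0$ holds for small $|t|$ because the components $z_i^{*}$ with $i\in\alpha$ and the components $(Az^{*}+q)_i = 1$ with $i\in\beta$ are strictly positive; complementarity is automatic, since $(Az(t)+q)_i = (Az^{*}+q)_i = 0$ on $\alpha$ (using $(Az^{0})_i = 0$ there) while $z(t)_i = z_i^{*} = 0$ on $\beta$. The corresponding $w$-solutions are $w(t) = (Az^{*}+q) + t\,Az^{0}$, which are pairwise distinct for distinct $t$ because $Az^{0}\neq 0$; hence $w(0)$ is a $w$-solution with other $w$-solutions arbitrarily close to it, so it is not locally $w$-unique, contradicting (iii).

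I expect the main obstacle to be precisely the construction in (iii) $\Rightarrow$ (i): one must pick $z^{*}$ and $q$ so that complementarity holds identically in $t$ while feasibility survives on a whole neighborhood of $t=0$, and one must track where $Az^{0}\neq 0$ is actually used (namely, to ensure the perturbed $w$-values genuinely move along a nondegenerate segment). By contrast, the index-set decomposition behind (i) $\Rightarrow$ (ii) and the ``finite implies isolated'' observation for (ii) $\Rightarrow$ (iii) are routine.
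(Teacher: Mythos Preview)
The paper does not supply its own proof of this theorem: it is stated in Section~2 as a preliminary result quoted from \cite{xu}, so there is no in-paper argument to compare against. Your proposal is therefore being judged on its own.

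Your argument is correct. The index-set decomposition in (i)~$\Rightarrow$~(ii) is the natural route: the check that $z^{1},z^{2}\in F_\alpha$ forces $z_i(Az)_i=0$ componentwise for $z=z^{1}-z^{2}$ is clean, and the bound of $2^{n}$ on the number of $w$-solutions follows. The step (ii)~$\Rightarrow$~(iii) is indeed trivial. For (iii)~$\Rightarrow$~(i), your explicit construction of $q$ from a witness $z^{0}$ to the failure of column competence is sound; the parenthetical remark that $\alpha$ and $\beta$ are both nonempty is justified (if $\alpha=\emptyset$ then $z^{0}=0$, and if $\beta=\emptyset$ then $z_i^{0}\neq 0$ for all $i$ forces $Az^{0}=0$), and the verification that $z(t)=z^{*}+t\,z^{0}$ solves $\text{LCP}(q,A)$ on a full interval $(-\delta,\delta)$ with $w(t)$ moving along a genuine segment (since $Az^{0}\neq 0$) is exactly what is needed to defeat local $w$-uniqueness at $w(0)$.

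One cosmetic point: in your feasibility check for $i\in\alpha$ you rely only on $z_i^{*}>0$, not on the sign of $z_i^{0}$; you might make explicit that $z_i^{0}$ is allowed to be negative there, which is why you need the two-sided interval $(-\delta,\delta)$ rather than $[0,\delta)$. This is already implicit in what you wrote but worth a half-sentence.
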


\begin{theorem} \label{thm2} \cite{xu}
	The following statements are equivalent.
	\begin{enumerate}[(i)]
		\item 
		(a) $A$ is column competent.\\
		(b) $A$ is a $P_0$-matrix.\\
		\item $A$ is column adequate.
	\end{enumerate}
\end{theorem}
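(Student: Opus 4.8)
\medskip
\noindent\emph{Proof strategy.} The plan is to treat $(ii)\Rightarrow(i)$ first, since it is the short half. If $A$ is column adequate it is at once column competent, because $z_i(Az)_i=0$ for all $i$ gives $z_i(Az)_i\le 0$ for all $i$ and hence $Az=0$. That a column adequate $A$ is a $P_0$-matrix I would get from the standard sign-reversal characterization of $P_0$: $A\notin P_0$ exactly when there is a nonzero $x$ with $x_i(Ax)_i<0$ for every $i$ with $x_i\neq 0$. For such an $x$ the zero coordinates contribute $0$, so $x_i(Ax)_i\le 0$ for all $i$; column adequacy forces $Ax=0$, hence $x_i(Ax)_i=0$ at a coordinate witnessing $x\neq 0$, a contradiction. (I would cite this characterization, or prove it in one line using ``$P_0$ plus a positive diagonal is a $P$-matrix''.)

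The real content is $(i)\Rightarrow(ii)$. Here I would first note that column adequacy factors through column competence: since $A$ is assumed column competent, it suffices to prove the implication $z_i(Az)_i\le0\ (\forall i)\Rightarrow z_i(Az)_i=0\ (\forall i)$, after which column competence turns the conclusion into $Az=0$. I would also record the kernel reformulation of column competence: $A$ is column competent iff $\ker A_{\gamma\gamma}=\ker A_{\cdot\gamma}$ for every index set $\gamma$, where $A_{\cdot\gamma}$ denotes the column block with all $n$ rows and columns indexed by $\gamma$. This comes from taking $\gamma=\{i:z_i\neq 0\}$ in the definition, and it makes clear that every principal submatrix of a column competent matrix is column competent.

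Now fix $z$ with $z_i(Az)_i\le 0$ for all $i$, set $\gamma=\{i:z_i\neq0\}$, $B=A_{\gamma\gamma}$, $y=z_\gamma$. Then $(Az)_i=(By)_i$ for $i\in\gamma$, so $y$ has all coordinates nonzero and $y_i(By)_i\le0$ for all $i$, while $B$ is a $P_0$ column competent matrix. The diagonal sign-change $D=\mathrm{diag}(\mathrm{sign}\,y_i)$ normalizes this: $DBD$ is still $P_0$ (a $\pm1$ signature leaves every principal minor unchanged) and still column competent (the substitution $x\mapsto Dx$ respects the kernel reformulation), and $u:=Dy>0$ satisfies $(DBD)u\le0$. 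So everything reduces to the lemma: \emph{if $B\in P_0$ is column competent, $u>0$ and $Bu\le0$, then $Bu=0$}, which I would prove by induction on the order $n$. For $n=1$, $B=[b]$ with $b\ge0$ and $bu_1\le 0$, $u_1>0$ force $b=0$. For $n\ge2$, suppose $Bu\neq0$; the sign-reversal characterization applied to $u\neq0$ yields an index $k$ with $u_k(Bu)_k\ge0$, hence $(Bu)_k=0$, and we put $\gamma=\{1,\dots,n\}\setminus\{k\}$. If $B_{kk}=0$, column competence applied to the $k$-th standard basis vector $e_k$ (for which $z_i(Bz)_i=0$ for all $i$) shows the $k$-th column of $B$ vanishes, so $(Bu)_\gamma=B_{\gamma\gamma}u_\gamma\le0$ with $u_\gamma>0$ and $B_{\gamma\gamma}$ again $P_0$ and column competent of order $n-1$; induction gives $B_{\gamma\gamma}u_\gamma=0$, hence $Bu=0$. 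If $B_{kk}>0$, pivot on it: the Schur complement $S=B/B_{kk}$ lies in $P_0$, and $Su_\gamma=(Bu)_\gamma\le0$ with $u_\gamma>0$, so induction gives $Bu=0$ once one knows $S$ is column competent.

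The step I expect to be the main obstacle is precisely showing that the Schur complement $S=B/B_{kk}$ of a $P_0$ column competent matrix is again column competent. The kernel reformulation is what should make it go through --- combine $\ker(B/B_{kk})=\{x:(x,-B_{kk}^{-1}B_{k\gamma}x)\in\ker B_{(\gamma\cup\{k\})(\gamma\cup\{k\})}\}$ with $\ker B_{(\gamma\cup\{k\})(\gamma\cup\{k\})}=\ker B_{\cdot(\gamma\cup\{k\})}$ to deduce $\ker S_{\delta\delta}\subseteq\ker S_{\cdot\delta}$ for every $\delta\subseteq\gamma$ --- so carefully setting up that reformulation, together with the bookkeeping for enlarging the index set by the pivot, is where the work lies. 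The remaining ingredients (the sign-reversal characterization of $P_0$, closure of $P_0$ under Schur complementation with a positive pivot, and the two reductions to a strictly positive $u$) are standard or routine.
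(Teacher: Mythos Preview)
The paper does not prove this theorem. It is quoted verbatim from Xu in the Preliminaries section as background (alongside Theorems~\ref{thm1}, \ref{thm3}, \ref{thm4} and Lemma~\ref{lem1}), with no argument supplied. So there is no ``paper's own proof'' to compare against; I can only assess your proposal on its merits.

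Your argument is sound. The direction $(ii)\Rightarrow(i)$ is handled correctly and efficiently. For $(i)\Rightarrow(ii)$, the chain of reductions --- restrict to the support $\gamma$, conjugate by the sign matrix $D$, and reduce to the lemma ``$B\in P_0$ column competent, $u>0$, $Bu\le0\Rightarrow Bu=0$'' --- is clean, and the induction works. In the $B_{kk}=0$ branch, applying column competence to $e_k$ to kill the $k$-th column is the right move; in the $B_{kk}>0$ branch, pivoting and using $(Bu)_k=0$ to get $Su_\gamma=(Bu)_\gamma$ is exactly what is needed.

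One remark: the step you flag as the ``main obstacle'' --- that the Schur complement $S=B/B_{kk}$ inherits column competence --- is in fact shorter than your sketch suggests, and does not require $P_0$ or the full kernel reformulation. Given $v$ with $v_i(Sv)_i=0$ for all $i\in\gamma$, set $w_\gamma=v$ and $w_k=-B_{kk}^{-1}B_{k\gamma}v$. Then $(Bw)_k=B_{k\gamma}v+B_{kk}w_k=0$ and $(Bw)_\gamma=B_{\gamma\gamma}v+B_{\gamma k}w_k=Sv$, so $w_i(Bw)_i=0$ for every $i$; column competence of $B$ gives $Bw=0$, hence $Sv=(Bw)_\gamma=0$. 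This two-line computation replaces the bookkeeping you anticipated, so the proof is shorter than you expected.
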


\begin{theorem} \label{thm3} \cite{cps}
	Let $A \in R^{n \times n}$ be a $E_0$-matrix. Then the following statements are equivalent.
	\begin{enumerate}[(i)]
		\item $A \in R_0.$
		\item $A \in R.$
	\end{enumerate}
\end{theorem}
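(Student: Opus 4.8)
The plan is to prove the two implications $(i)\!\Rightarrow\!(ii)$ and $(ii)\!\Rightarrow\!(i)$ separately, and to observe at the outset that only the first one actually consumes the hypothesis $A\in E_0$.

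The implication $(ii)\Rightarrow(i)$ is immediate for an arbitrary square matrix. The defining property of the class $R$ is that a certain parametrized family of homogeneous complementarity systems, indexed by a nonnegative scalar $t$, admits no solution other than the zero vector; specialising the parameter to $t=0$ turns that system into ``$z\ge 0,\ Az\ge 0,\ z^{T}Az=0\Rightarrow z=0$'', i.e.\ $\mathrm{SOL}(0,A)=\{0\}$, which is precisely $A\in R_0$. So I would dispose of this direction in a single line; all the content is in $(i)\Rightarrow(ii)$.

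For $(i)\Rightarrow(ii)$, assume $A\in E_0\cap R_0$, and recall that $A\in E_0$ means $A$ is semimonotone: for every $x\ge 0$ with $x\neq 0$ there is an index $k$ with $x_k>0$ and $(Ax)_k\ge 0$. The parameter value $t=0$ in the definition of $R$ is exactly the statement $A\in R_0$, so it suffices to handle $t>0$. Fix such a $t$ and let $z$ solve the corresponding shifted homogeneous system, so $z\ge 0$, the slack $w=t\mathbf{1}+Az\ge 0$ (the argument is unchanged if the definition instead uses $w=t\mathbf{1}+(A+tI)z$), and $z^{T}w=0$. Componentwise complementarity forces $w_i=0$ for every index $i$ in the support $S=\{\,i:z_i>0\,\}$, and because $t>0$ this yields the \emph{strict} inequality $(Az)_i<0$ for all $i\in S$. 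If $z\neq 0$ then $S\neq\varnothing$, so we would have a nonzero $z\ge 0$ with $(Az)_i<0$ on its entire support, contradicting semimonotonicity, which supplies some $k\in S$ with $(Az)_k\ge 0$. Hence $z=0$; since $t>0$ was arbitrary, $A\in R$.

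I do not anticipate a real obstacle: once the classes are unwound the whole point is that a strictly positive perturbation is incompatible with semimonotonicity unless the solution is trivial. The two places needing care are: (a) lining up the (equivalent) formulations of the class $R$ found in the literature so that the shift by $t$ genuinely makes $(Az)_i$ \emph{strictly} negative on the support, since a merely non-strict inequality there would not contradict $E_0$; and (b) remembering to invoke $A\in R_0$ explicitly for the degenerate value $t=0$, because semimonotonicity by itself does not yield it.
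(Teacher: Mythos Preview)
Your argument is correct and is essentially the standard proof found in the cited reference \cite{cps}; however, the present paper does not supply its own proof of this statement at all. Theorem~\ref{thm3} appears in the Preliminaries as a quoted result from \cite{cps}, so there is no in-paper proof to compare against. Your write-up could therefore serve as a self-contained justification should one be desired; the only cosmetic point is that in this paper the class $R$ is not formally defined, so if you insert your proof you should also fix a definition (the one you use, namely that for every $t\ge 0$ the system $0\le z\perp Az+te\ge 0$ has only the trivial solution, is the standard one and makes both directions go through exactly as you wrote).
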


We say that $A\in R^{n\times n}$ is a 

\NI $-$ {\it $Q$}-matrix if for every $q\in R^{n},$ LCP$(q,A)$ has a solution. \\
\NI $-$ {\it $Q_{0}$}-matrix if for any $q\in R^{n},$ feasibility implies solvability. \\
\NI $-$ {\it $P$}-matrix if for each vector $z \neq 0$ there exists an index $i$ such that $\max_{(z_i \neq 0)}z_i(Az)_i > 0.$\\
\NI $-$ {\it $P_0$}-matrix if for each vector $z \neq 0$ there exists an index $i$ such that $\max_{(z_i \neq 0)}z_i(Az)_i \geq 0.$\\
\NI $-$ {\it $R_0$}-matrix if LCP$(0,A)$ has unique solution.\\
\NI $-$ {\it principally non-degenerate} if it has no principal submatrix which has determinant zero.\\For further details about the matrix classes in linear complementarity problem see 
(\cite{das11},\cite{mohan2001more}, \cite{neogy2006some}, \cite{jana2019hidden}, \cite{jana2019more}).

The principal pivot transform (PPT) has an important role in the study of matrix classes and linear complementarity problem. The {\it principal pivot transform} of $A \in R^{n \times n}$ with real entries, with respect to $\al\subseteq
\{1,2,\ldots,n\}$ is defined as the matrix given by
$$ \dsp {
	A' = \left[ \begin{array}{cc}
	A^{'}_{\al\al} & A^{'}_{\al\bar{\al}}\\
	A^{'}_{\bar{\al}\al} & A^{'}_{\bar{\al}\bar{\al}}
	\end{array} \right]
} $$
{where}
$A^{'}_{\al\al} =(A_{\al\al})^{-1},\;
A^{'}_{\al\bar{\al}}$=$-(A_{\al\al})^{-1} 
A_{\al\bar{\al}},\;\,$$A^{'}_{\bar{\al}\al}=
A_{\bar{\al}\al}(A_{\al\al})^{-1}$ and $A^{'}_{\bar{\al}\bar{\al}}=
A_{\bar{\al}\bar{\al}}-A_{\bar{\al}\al}(A_{\al\al})^{-1}
A_{\al\bar{\al}}.$ 

Here PPT is only identified with respect to those $\al$ for which
$\det A_{\al\al} \neq 0.$  When $\al=\emptyset$, by convention $\det A_{\al\al}=1$ and $A'=A.$ Here $A^{'}_{\bar{\al}\bar{\al}}$ is said to be Schur complement of $A.$ We denote the PPT of A as $A^{'} = \mathcal{P}_{\alpha}(A).$  The schur complement of $A_{\alpha \alpha}$ in $A=\left[ \begin{array}{cc}
A_{\al\al} & A_{\al\bar{\al}}\\
A_{\bar{\al}\al} & A_{\bar{\al}\bar{\al}}
\end{array} \right]$ is a principal submatrix of the principal pivot transform $A'$. For details of PPT see (\cite{neogy2}, \cite{neogy1}, \cite{das}).

We establish a connection between competent matrices and adequate matrices using degree theoretic approach. We provide a brief details about degree theory in the subsequent section. 
\subsection{Degree theory}
Let $f_{A}: R^n \raro R^n$ be a piecewise linear mapping for a given matrix $A \in R^{n \times n}$ defined as $f_{A}(e_i) = e_i$ and $f_{A}(-e_i) = -Ae_i \ \forall \ i.$ We write for any $z \in R^n,$ 
$$f_{A}(z) = z^{+} - Az^{-}.$$ For details see \cite{mohan2001classes}. It is clear that LCP$(q, A)$ is equivalent to find a vector $z \in R^{n}$ such that $f_{A}(z) = q.$ If $z$ belongs to the interior of some orthants of $R^n$ and $\det A_{\al \al} \neq 0$ where $\al = \{i : z_i < 0\},$ then the index of $f_{A}(z)$ at $z$ is well defined and can be written as 
$$\text{ind} f_{A}(q, z) = sgn(\det A_{\al \al}).$$ Note that the cardinality of $f_{A}^{-1}(q)  $ denotes the number of solutions of LCP$(q, A).$ Particularly, if $q$ is non-degenerate with respect
to $A,$ each index of $f_{A}$ is well defined and we can define local degree of $A$ at $q.$ It can be denoted as deg$_{A}(q).$ For details see (\cite{cps}, chapter 6). We state the following theorem from \cite{mohan2001classes}, which will be required to prove one of our result.

\begin{theorem} \label{thm4}
	Let $A \in$$ R^{n \times n}.$ Let $K(A)$ denote the union of all the facets of the complementary cones of $(I,{}-A)$. 
	Consider $q \in R^n \setminus k(A)$ where $q$ is non-degenerate with respect to $A.$ Let $\beta \subseteq \{1, 2, \cdots, n\}$ be such that $\det A_{\beta \beta}\neq 0.$ Suppose $A'$ is a PPT of $A$ with respect to $\beta.$ Then
	$\text{deg}_{A'}(q') = sgn( \det A_{\beta \beta}) \cdot \text{deg}_{A}(q).$
\end{theorem}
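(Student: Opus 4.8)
The plan is to work with the piecewise-linear map $f_{A}(z)=z^{+}-Az^{-}$ together with the linear reflection $R_{\beta}\colon R^{n}\to R^{n}$ given by $(R_{\beta}z)_{i}=-z_{i}$ for $i\in\beta$ and $(R_{\beta}z)_{i}=z_{i}$ for $i\in\bar{\beta}$, and to show that $z\mapsto R_{\beta}z$ restricts to a bijection from $f_{A}^{-1}(q)$ onto $f_{A'}^{-1}(q')$ which multiplies each local index by $\text{sgn}(\det A_{\beta\beta})$. Here $q'$ is the right-hand side induced by the pivot on the $\beta$-block, namely $q'_{\beta}=-(A_{\beta\beta})^{-1}q_{\beta}$ and $q'_{\bar{\beta}}=q_{\bar{\beta}}-A_{\bar{\beta}\beta}(A_{\beta\beta})^{-1}q_{\beta}$, so that $\mathrm{LCP}(q',A')$ is the pivoted form of $\mathrm{LCP}(q,A)$. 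Summing the index identity over the bijection will then give the claim at once.

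To set up the bijection, recall that for any $z\in R^{n}$ the vectors $w:=z^{+}$ and $\zeta:=z^{-}$ satisfy $w,\zeta\geq 0$, $w^{T}\zeta=0$ and $w-A\zeta=f_{A}(z)$; hence $z\mapsto(z^{+},z^{-})$ is a bijection from $f_{A}^{-1}(q)$ onto $\mathrm{SOL}(q,A)$ with inverse $(w,\zeta)\mapsto w-\zeta$. Since $\det A_{\beta\beta}\neq 0$, the block pivot on $\beta$ carries $\mathrm{SOL}(q,A)$ bijectively onto $\mathrm{SOL}(q',A')$, sending $(w,\zeta)$ to $(w',\zeta')$ with $w'_{\beta}=\zeta_{\beta}$, $\zeta'_{\beta}=w_{\beta}$, $w'_{\bar{\beta}}=w_{\bar{\beta}}$, $\zeta'_{\bar{\beta}}=\zeta_{\bar{\beta}}$. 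Composing, an element $z\in f_{A}^{-1}(q)$ corresponds to $z':=w'-\zeta'\in f_{A'}^{-1}(q')$, and a componentwise check gives $z'_{i}=-z_{i}$ for $i\in\beta$ and $z'_{i}=z_{i}$ for $i\in\bar{\beta}$, i.e. $z'=R_{\beta}z$. (Alternatively one verifies $f_{A'}(R_{\beta}z)=q'$ directly, by solving the $\beta$-block of $z^{+}-Az^{-}=q$ for $z^{+}_{\beta}$ and $z^{-}_{\beta}$ and substituting into the formula for $f_{A'}$.) Since $R_{\beta}$ preserves orthant interiors, $q$ is non-degenerate with respect to $A$ and $q\notin K(A)$ if and only if the same holds for $q'$ and $A'$; in particular $\text{deg}_{A'}(q')$ is well defined.

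Now fix $z\in f_{A}^{-1}(q)$ lying in the orthant with negative set $\alpha=\{i:z_{i}<0\}$; then $\det A_{\alpha\alpha}\neq 0$ and $\text{ind}\,f_{A}(q,z)=\text{sgn}(\det A_{\alpha\alpha})$. From $z'_{i}=-z_{i}$ on $\beta$ and $z'_{i}=z_{i}$ on $\bar{\beta}$, the negative set of $z'=R_{\beta}z$ is $\alpha'=\alpha\triangle\beta$, so $\text{ind}\,f_{A'}(q',z')=\text{sgn}(\det A'_{\alpha'\alpha'})$. Invoking the standard principal-minor identity for the principal pivot transform — for $A'=\mathcal{P}_{\beta}(A)$ and every $\gamma\subseteq\{1,\dots,n\}$ one has $\det A'_{\gamma\gamma}=\det A_{(\gamma\triangle\beta)(\gamma\triangle\beta)}/\det A_{\beta\beta}$ — with $\gamma=\alpha'$, and noting $\alpha'\triangle\beta=\alpha$, we get $\det A'_{\alpha'\alpha'}=\det A_{\alpha\alpha}/\det A_{\beta\beta}$, and hence
$$\text{ind}\,f_{A'}(q',z')=\text{sgn}(\det A_{\alpha\alpha})\cdot\text{sgn}(\det A_{\beta\beta})=\text{sgn}(\det A_{\beta\beta})\cdot\text{ind}\,f_{A}(q,z).$$
Summing over the bijection of the previous step,
$$\text{deg}_{A'}(q')=\sum_{z'\in f_{A'}^{-1}(q')}\text{ind}\,f_{A'}(q',z')=\text{sgn}(\det A_{\beta\beta})\sum_{z\in f_{A}^{-1}(q)}\text{ind}\,f_{A}(q,z)=\text{sgn}(\det A_{\beta\beta})\cdot\text{deg}_{A}(q),$$
which is the assertion. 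The two places carrying the real content are the orthant bookkeeping in the bijection — in particular that the negative set transforms by symmetric difference with $\beta$ and that orthant interiors are preserved — and the principal-minor identity for the PPT, which must be stated with the correct sign. I expect the latter, i.e. verifying $\text{sgn}(\det A'_{\gamma\gamma})=\text{sgn}(\det A_{(\gamma\triangle\beta)(\gamma\triangle\beta)})\cdot\text{sgn}(\det A_{\beta\beta})$, to be the main obstacle; once it is in hand the remainder is formal.
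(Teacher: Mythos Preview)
The paper does not prove this theorem at all: it is quoted verbatim from \cite{mohan2001classes} as a tool (``We state the following theorem from \cite{mohan2001classes}, which will be required to prove one of our result''), so there is no in-paper argument to compare your proposal against.

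That said, your argument is the standard one and is correct as outlined. The bijection $z\mapsto R_{\beta}z$ between $f_{A}^{-1}(q)$ and $f_{A'}^{-1}(q')$ via the exchange of $(w_{\beta},\zeta_{\beta})$ is exactly how the PPT acts on solutions, and the symmetric-difference computation of the negative set is right. The decisive ingredient is the pivotal determinant identity $\det A'_{\gamma\gamma}=\det A_{(\gamma\triangle\beta)(\gamma\triangle\beta)}/\det A_{\beta\beta}$ (Tucker's relation for principal pivot transforms), which you invoke correctly with $\gamma=\alpha\triangle\beta$; this identity is classical and appears in the LCP literature, so citing it is fine. One small point worth making explicit is that non-degeneracy of $q$ forces every $z\in f_{A}^{-1}(q)$ to lie in an open orthant (no zero coordinates), which you need in order for the symmetric-difference formula for $\alpha'$ to hold without ambiguity and for $\det A_{\alpha\alpha}\neq 0$; you gesture at this but it deserves a sentence.
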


\section{Results on Column Competent Matrices}
Hadamard product is important to characterize the complementary condition. Here we show that the property of column competent matrix is related to Hadamard product.
\begin{theorem} \label{1}
	Suppose $A$ is a column competent matrix and the function $\psi:R^n \to R^n$ defined by $\psi(z)=z*(Az)$ where $z*(Az)$ is the Hadamard product. Then $\text{ker} \ \psi= \text{ker} A. $
\end{theorem}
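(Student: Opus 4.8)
The plan is to prove the two set inclusions $\ker A \seq \ker \psi$ and $\ker \psi \seq \ker A$ separately; each follows almost immediately once the Hadamard-product equation is rewritten coordinatewise, so the theorem is essentially a restatement of the defining property of column competence in the language of kernels (i.e.\ the equivalence of Definitions 2.1 and 2.2).

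First I would establish $\ker A \seq \ker \psi$, noting that this inclusion holds for an arbitrary matrix $A$ and uses no hypothesis: if $z \in \ker A$, then $Az = 0$, and since the zero vector is absorbing for the Hadamard product we get $\psi(z) = z*(Az) = z*0 = 0$, hence $z \in \ker \psi$. For the reverse inclusion $\ker \psi \seq \ker A$, take $z \in \ker \psi$. Then $\psi(z) = z*(Az) = 0$, and reading this identity off in each coordinate gives $z_i(Az)_i = (z*(Az))_i = 0$ for every $i = 1, 2, \cdots, n$. This is precisely the antecedent in the definition of a column competent matrix, so by hypothesis $Az = 0$, i.e.\ $z \in \ker A$. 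Combining the two inclusions yields $\ker \psi = \ker A$.

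There is no real obstacle here: the entire content is the observation that the single vector equation $z*(Az) = 0$ is equivalent to the system $z_i(Az)_i = 0$ for all $i$, after which column competence applies directly. The only point worth stating carefully is that this coordinatewise reformulation is an ``if and only if'', which is what makes the kernel formulation an exact equivalent of Definition 2.1 rather than merely an implication.
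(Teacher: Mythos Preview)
Your proof is correct and follows essentially the same approach as the paper: prove the two inclusions separately, obtaining $\ker\psi\seq\ker A$ directly from the definition of column competence and $\ker A\seq\ker\psi$ from the trivial fact that $Az=0$ forces $z*(Az)=0$. The only cosmetic difference is the order in which the two inclusions are handled.
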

\begin{proof}
	Let $A$ be a column competent matrix. Then for a vector $z \in R^n, z_i(Az)_i=0, \ i=1,2,\cdots,n \implies Az = 0.$ Hence $z \in\text{ker} \ \psi$ implies $z \in \text{ker} A.$ So we write $\text{ker} \ \psi \subseteq \text{ker} A.$ Again by definition $\text{ker} A \subseteq \text{ker} \ \psi.$ Therefore $\text{ker} \ \psi= \text{ker} A.$
\end{proof}
The following result provides a characterization of non-degenerate column competent matrices. 
\begin{theorem}
	Let $A \in R^{n\times n}$ be a non-degenerate column competent matrix. Then $A \in R_0.$
\end{theorem}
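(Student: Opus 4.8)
The plan is to show directly that $\mathrm{SOL}(0,A) = \{0\}$, which is precisely the statement $A \in R_0$. Let $z \in \mathrm{SOL}(0,A)$. Then $z \geq 0$, $Az \geq 0$, and $z^T(Az) = 0$. Since $z \geq 0$ and $Az \geq 0$, the condition $z^T(Az) = \sum_i z_i(Az)_i = 0$ is a sum of nonnegative terms, so each term vanishes: $z_i(Az)_i = 0$ for all $i = 1, 2, \ldots, n$. In other words, $\psi(z) = z*(Az) = 0$, i.e.\ $z \in \mathrm{ker}\ \psi$.

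Now I would invoke the hypotheses on $A$. Because $A$ is column competent, Theorem~\ref{1} (or the defining property directly) gives $\mathrm{ker}\ \psi = \mathrm{ker}\ A$, so $z \in \mathrm{ker}\ A$, meaning $Az = 0$. Because $A$ is non-degenerate, Lemma~\ref{lem1} tells us $\mathrm{ker}\ \psi = \{0\}$; combined with $z \in \mathrm{ker}\ \psi$ this forces $z = 0$. (One could also chain the two facts: non-degeneracy plus column competence give $\mathrm{ker}\ A = \mathrm{ker}\ \psi = \{0\}$, and $Az = 0$ then yields $z = 0$.) Hence the only solution of $\mathrm{LCP}(0,A)$ is $z = 0$, and therefore $A \in R_0$.

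The argument is short and the only real content is the passage from the complementarity condition $z^T(Az)=0$ together with sign constraints to the pointwise Hadamard condition $z_i(Az)_i = 0$; after that it is a one-line application of Lemma~\ref{lem1}. So there is no serious obstacle here — the "hard part," such as it is, is simply recognizing that non-degeneracy is exactly what upgrades "$Az = 0$" (which column competence alone would give) to "$z = 0$." It may be worth remarking afterward that column competence alone does not suffice (a column competent singular matrix has $0 \neq z \in \mathrm{ker}\ A \subseteq \mathrm{SOL}(0,A)$), which is why the non-degeneracy hypothesis cannot be dropped.
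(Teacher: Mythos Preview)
Your proof is correct and follows essentially the same route as the paper: take a solution $z$ of LCP$(0,A)$, deduce $z_i(Az)_i=0$ for each $i$, apply Lemma~\ref{lem1} and Theorem~\ref{1} to conclude $z=0$. If anything, you spell out more carefully than the paper why the complementarity condition $z^T(Az)=0$ together with $z\ge 0$, $Az\ge 0$ forces each term $z_i(Az)_i$ to vanish.
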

\begin{proof}
	Let $A$ be a non-degenerate column competent matrix. By Lemma \ref{lem1}, $\text{ker} \ \psi = \{0\}$ where $\psi(z)=z*Az.$ By Theorem \ref{1}, we can write $\text{ker}\ \psi=\text{ker\ }A= \{0\}.$ Let $z$ be the solution of LCP$(0,A).$ Then $z_i(Az)_i=0, \ i=1,2, \cdots,n.$ This implies that $Az=0.$ Hence $z=0.$ Therefore, LCP$(0, A)$ has only one solution zero. Hence  $A$ is a $R_0$- matrix.
\end{proof}
Note that column competent matrix need not be a $P_0$- matrix in general. Consider the matrix $A=$ $\left[\begin{array}{rr} 
2 & 1 \\
1 & -1 \\
\end{array}\right].$ We show that $A$ is a column competent matrix but not a $P_0$-matrix. Now we establish the following result.
\begin{theorem}\label{33}
	Suppose $A$ is a column competent matrix with $A \in P_0.$ Then for $0 \neq z\geq 0,$ $(z,0)$ is the solution of LCP$(0,A).$	
\end{theorem}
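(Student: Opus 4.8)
The plan is to unwind the definition of a solution of LCP$(0,A)$ for a nonzero $z$ and then invoke the column competence of $A$ (or, equivalently via Theorem~\ref{thm2}, its column adequacy) to force $Az=0$.

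First I would take an arbitrary solution $z$ of LCP$(0,A)$ with $z\neq 0$; here the clause ``$0\neq z\geq 0$'' merely records that feasibility already forces $z\geq 0$. By the definitions of FEA$(0,A)$ and SOL$(0,A)$ this means $z\geq 0$, $Az=Az+0\geq 0$, and $z^{T}(Az)=0$. Rewriting the last identity as $\sum_{i=1}^{n} z_i(Az)_i=0$ and observing that $z\geq 0$ together with $Az\geq 0$ makes each term $z_i(Az)_i$ nonnegative, I conclude that every term vanishes, i.e. $z_i(Az)_i=0$ for $i=1,2,\ldots,n$.

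Next I would apply column competence: the condition $z_i(Az)_i=0$ for all $i$ (equivalently $z\in\text{ker}\,\psi$, with $\psi$ as in Theorem~\ref{1}) gives $z\in\text{ker}\,A$, that is $Az=0$. Hence the $w$-component associated with $z$ is $w=Az+q=Az=0$, so the solution pair is $(z,w)=(z,0)$, as claimed. If one prefers to make visible use of the hypothesis $A\in P_0$, Theorem~\ref{thm2} permits replacing ``column competent and $P_0$'' by ``column adequate'', and then $z_i(Az)_i=0\leq 0$ for all $i$ yields $Az=0$ directly from the definition of column adequacy.

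I do not expect a genuine obstacle in this argument; the only point requiring care is the logical reading of the statement — it asserts that the $w$-component of \emph{any} nonzero solution of LCP$(0,A)$ must be the zero vector — together with the observation that the $P_0$ assumption is used only to route the proof through column adequacy, column competence by itself already sufficing once the nonnegativity-of-terms step is in place.
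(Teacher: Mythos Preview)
Your argument is correct and rests on the same key step as the paper's: once $z_i(Az)_i=0$ for all $i$, column competence forces $Az=0$ and hence $w=0$. Where you differ is in direction and completeness. The paper's proof cites the $P_0$ inequality $\max_{z_i\neq 0} z_i(Az)_i\ge 0$ and then, without further justification, passes to the hypothesis $z_i(Az)_i=0$ for all $i$, concluding that $(z,0)$ solves LCP$(0,A)$; in effect it argues ``$z\in\ker\psi$, $z\ge 0$ $\Rightarrow$ $(z,0)$ is a solution.'' You instead read the statement as ``$z$ a nonzero solution $\Rightarrow$ $w=0$'' and supply the step the paper omits: from $z\ge 0$, $Az\ge 0$, and $z^{T}(Az)=0$ you deduce that every term $z_i(Az)_i$ vanishes. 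Your observation that the $P_0$ hypothesis is not actually needed---column competence alone carries the argument, with Theorem~\ref{thm2} available only as an optional rerouting through column adequacy---is accurate; the paper invokes $P_0$ but makes no genuine use of it.
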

\begin{proof}
	Let $A\in R^{n\times n}$ be a column competent matrix with $A \in P_0.$ Then for each $0 \neq z,$ $\max_{i} z_i(Az)_i \geq 0, z_i\neq 0.$  If $z_i(Az)_i=0, \ i=1,2,\cdots,n$ implies that $Az=0.$ Then $(z,0), z\geq 0$ is the solution of LCP$(0,A).$
\end{proof}
Now we consider the matrix $A=$ $\left[\begin{array}{rr} 
2 & -1 \\
-4 & 2 \\
\end{array}\right]$ is column competent as well as $P_0$ and $(\left[\begin{array}{rr} 
1\\
2 \\
\end{array}\right], \left[\begin{array}{rr} 
0\\
0 \\
\end{array}\right])$ is a solution of LCP$(0,A).$  Note that this can be explained using the Theorem \ \ref{33}.
\begin{theorem}
	Let $A$ be a column competent matrix. Suppose $z \geq 0$ and $z_i(Az)_i=0, \  i= 1,2, \cdots, n.$ Then LCP$(0, A)$ has the solution $(z, 0).$ 
\end{theorem}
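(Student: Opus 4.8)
The plan is to apply the defining property of column competence directly and then verify the three defining conditions of a solution of LCP$(0,A)$. First I would invoke Definition~2.1: since $A$ is column competent and the hypothesis gives $z_i(Az)_i = 0$ for every $i = 1,2,\ldots,n$, we immediately obtain $Az = 0$.

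Next I would form the associated $w$-vector. For LCP$(0,A)$ we have $q = 0$, so the natural candidate is $w = Az + q = Az = 0$. It then remains to check that $(z,w) = (z,0)$ lies in SOL$(0,A)$. Feasibility holds: $z \geq 0$ is assumed, and $w = 0 \geq 0$ while $q + Az = Az = 0 \geq 0$, so $z \in \text{FEA}(0,A)$. Complementarity holds trivially: $z^{T}(q + Az) = z^{T} \cdot 0 = 0$, equivalently $w^{T}z = 0$. Hence $(z,0)$ is a solution of LCP$(0,A)$.

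There is essentially no obstacle here; the statement is a direct consequence of the definition of a column competent matrix, and in fact it is the $P_0$-free companion of Theorem~\ref{33}: the $P_0$ assumption there is only used to guarantee the existence of such a $z$ with $z_i(Az)_i \geq 0$ at a maximizing index, whereas here the existence of a $z \geq 0$ with $z_i(Az)_i = 0$ for all $i$ is taken as a hypothesis, so one proceeds immediately to the conclusion. If anything needs care, it is only the bookkeeping of which component of the pair plays the role of $w$ and which of $z$, and the observation that $q=0$ forces $w=0$ once $Az=0$ is established.
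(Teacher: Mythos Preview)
Your proof is correct and follows exactly the same route as the paper's own argument: use the definition of column competence to conclude $Az=0$, then observe that with $q=0$ the pair $(z,0)$ satisfies feasibility and complementarity. Your write-up is in fact more careful than the paper's, which simply states ``$Az=0$, therefore $(z,0)$ is the solution of LCP$(0,A)$'' without spelling out the verification.
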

\begin{proof}
	Since $A$ is a column competent matrix, then for $z \geq 0$ and $z_i(Az)_i=0,  \  i = 1, 2, \cdots, n.$ This implies that $Az = 0.$  Therefore $(z,0)$  is the solution of LCP$(0, A).$ 
\end{proof}	

Xu \cite{xu} showed that if $A$ is a column competent matrix then $DAD^T$ is a column competent matrix where $D$ is a diagonal matrix. In the next theorem, we prove that column competent matrices with some additional assumptions are invariant under principal rearrangement. For any principal submatrix $A_{\alpha \alpha}$ of $A,$it is possible to rearrange principally the rows and columns of $A$ in such a way that $A_{\alpha \alpha}$ becomes a leading principal submatrix in the rearranged matrix $PAP^T.$
\begin{theorem}
	Suppose $A$ is a column competent matrix. If for any $z \in R^n,$ either $z_i(Az)_i \geq 0$ or $z_i(Az)_i \leq 0$ for all $i,$ then $PAP^T$ is also column competent where $P$ is a permutation matrix.
\end{theorem}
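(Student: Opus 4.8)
The plan is to reduce column competence of $PAP^T$ directly to that of $A$, exploiting the fact that a permutation matrix satisfies $P^{-1}=P^T$, so that $B:=PAP^T$ is similar to $A$ and, more importantly, is obtained from $A$ by a \emph{simultaneous} permutation of rows and columns.

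First I would take an arbitrary $y\in R^n$ with $y_i(By)_i=0$ for every $i$, and aim to conclude $By=0$ (which is exactly $\text{ker}\,\psi_B=\text{ker}\,B$ in the language of the paper). Setting $z=P^Ty$, so that $y=Pz$ and $By=PAP^Ty=PAz$, I would use that $P$ realizes some permutation $\pi$ of $\{1,\dots,n\}$, whence $(Pz)_i=z_{\pi(i)}$ and $(PAz)_i=(Az)_{\pi(i)}$. Therefore $y_i(By)_i=z_{\pi(i)}(Az)_{\pi(i)}$, and since $i\mapsto\pi(i)$ is a bijection, the hypothesis $y_i(By)_i=0$ for all $i$ is equivalent to $z_j(Az)_j=0$ for all $j$. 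In other words, $\psi$ is equivariant under the simultaneous row/column permutation, so $z\in\text{ker}\,\psi$ (for $A$) iff $y\in\text{ker}\,\psi_B$.

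Next, since $A$ is column competent, $z_j(Az)_j=0$ for all $j$ forces $Az=0$, and then $By=PAz=P\cdot 0=0$. This proves $PAP^T$ is column competent. As a byproduct one gets the stated invariance under principal rearrangement, and in particular one may always rearrange so that a prescribed principal submatrix $A_{\alpha\alpha}$ becomes leading while preserving column competence.

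The only thing requiring care is the bookkeeping of the permutation action on the Hadamard product; there is no genuine obstacle. I would also note that the sign hypothesis on $z_i(Az)_i$ is not actually consumed by this argument --- permutation similarity by itself preserves column competence --- but it is the natural condition to state alongside the leading-principal-submatrix construction that motivates the result, and it is what will be invoked in the surrounding development.
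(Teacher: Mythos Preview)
Your argument is correct, and it is genuinely different from the paper's.

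The paper does not track the permutation index by index. Instead, with $y=Pz$ it passes from $y_i(PAP^Ty)_i=0$ for all $i$ to the \emph{sum}
\[
\sum_{i=1}^{n}(Pz)_i(PAP^TPz)_i \;=\; z^TP^TPAP^TPz \;=\; z^TAz \;=\; 0,
\]
i.e.\ it only deduces $\sum_i z_i(Az)_i=0$. It is precisely here that the extra sign hypothesis is invoked: since all $z_i(Az)_i$ have a common sign and their sum is zero, each $z_i(Az)_i=0$; column competence of $A$ then gives $Az=0$ and hence $PAP^T y=0$.

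Your route, by contrast, uses that a permutation matrix acts on the Hadamard product componentwise, $(Pz)_i(PAz)_i=z_{\pi(i)}(Az)_{\pi(i)}$, so the vanishing of every $y_i(By)_i$ transfers \emph{directly} to the vanishing of every $z_j(Az)_j$ without first collapsing to a scalar. This is more elementary and, as you observed, makes the sign assumption superfluous: permutation similarity alone preserves column competence. What the paper's approach buys is only that it matches the statement as written; what your approach buys is a strictly stronger conclusion with a shorter proof.
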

\begin{proof}
	Let for any $z \in R^n,$ $y = Pz.$ Consider $y_i(PAP^Ty)_i=0$   for all $ i.$ This implies that $(Pz)_i(PAP^TPz)_i=0$ for all $i.$ We know that \begin{center}
		$z^TP^TPAP^TPz=$ $\sum_{i=1}^{n}(Pz)_i(PAP^TPz)_i$ $= 0.$
	\end{center}  Hence $z^TAz = 0$ as $P^TP = I.$ We write $\sum_{i=1}^{n}z_i(Az)_i=0.$ It means $z_i(Az)_i=0, \ i=1,2,\cdots,n. $ As $A$ is a column competent matrix, $Az=0.$ Therefore $AP^TPz=0.$  Hence $(PAP^T)(Pz)=0.$ Hence $PAP^T $ is column competent.
\end{proof}
\begin{theorem}
	Let $A$ be a $Z$- matrix. Suppose $z_i(Az)_i=0$ for all $ i, $ $A|z| \geq 0 $ and $Az \leq 0.$ Then  $A$ is a column competent matrix.  
\end{theorem}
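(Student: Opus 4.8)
The plan is to verify the defining property of a column competent matrix directly: assume $z \in R^n$ satisfies $z_i(Az)_i = 0$ for all $i$, and deduce $Az = 0$. First I would exploit the hypothesis $A|z| \ge 0$ together with the $Z$-matrix structure. Writing $z = z^{+} - z^{-}$ and $|z| = z^{+} + z^{-}$, the condition $z_i(Az)_i = 0$ for each $i$ says that on the support of $z$ the corresponding component of $Az$ vanishes, while off the support $z_i = 0$; in particular $z^T(Az) = 0$, i.e.\ $\sum_i z_i (Az)_i = 0$, which is automatic but also $z^{+T}(Az) = z^{-T}(Az) = 0$ separately since each summand is zero.

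The key step is to bring in the $Z$-matrix property. For a $Z$-matrix $A$ the off-diagonal entries are nonpositive, so for any two disjointly-supported nonnegative vectors $u, v \ge 0$ one has $u^T A v \le 0$. Taking $u = z^{+}$ and $v = z^{-}$ (which have disjoint supports) gives $z^{+T} A z^{-} \le 0$ and likewise $z^{-T} A z^{+} \le 0$. Then I would compute
$$|z|^T A |z| = (z^{+} + z^{-})^T A (z^{+} + z^{-}) = z^{+T}Az^{+} + z^{-T}Az^{-} + z^{+T}Az^{-} + z^{-T}Az^{+}.$$
On the other hand $z^T A z = z^{+T}Az^{+} + z^{-T}Az^{-} - z^{+T}Az^{-} - z^{-T}Az^{+} = 0$ from the hypothesis. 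Subtracting, $|z|^T A |z| = 2(z^{+T}Az^{-} + z^{-T}Az^{+}) \le 0$ by the $Z$-matrix inequality. But $A|z| \ge 0$ and $|z| \ge 0$ force $|z|^T A |z| \ge 0$, so $|z|^T A |z| = 0$, and hence each of the nonpositive cross terms $z^{+T}Az^{-}$ and $z^{-T}Az^{+}$ is zero, which in turn forces $z^{+T}Az^{+} + z^{-T}Az^{-} = 0$ as well.

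From here I would use $Az \le 0$ to pin down $Az = 0$. Since $z^{+T}(Az) = 0$ with $z^{+} \ge 0$ and, after the previous step, the relevant combinations all vanish, one gets $\sum_i z^{+}_i (Az)_i = 0$ and $\sum_i z^{-}_i(Az)_i = 0$ where each $(Az)_i \le 0$; nonnegativity of the $z^{\pm}_i$ then forces $(Az)_i = 0$ whenever $z_i \ne 0$. For indices $i$ with $z_i = 0$, the vanishing of $|z|^T A|z|$ combined with $A|z| \ge 0$ and the structure just derived should let me conclude $(A|z|)_i = 0$ as well, and since $|z| = z^{+} + z^{-}$ has the same support as $z$, this yields $Az = A|z| - 2Az^{-}$-type bookkeeping that collapses to $Az = 0$. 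The step I expect to be the main obstacle is precisely this last one: carefully handling the coordinates outside the support of $z$, where $z_i(Az)_i = 0$ gives no information and one must squeeze everything out of $A|z| \ge 0$, $Az \le 0$, and the quadratic-form identity. If that coordinate bookkeeping does not close cleanly, the fallback is to split into the indices in $\alpha = \{i : z_i \ne 0\}$ and its complement and argue that $A_{\bar\alpha \alpha}$ contributes nonpositively while $A|z|$ is nonnegative, forcing the off-support block of $Az$ to vanish too.
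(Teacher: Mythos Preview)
Your main line via quadratic forms does not close, and you correctly identify why: everything you extract from $|z|^T A|z| = 0$ and the vanishing cross terms $z^{+T}Az^{-}$, $z^{-T}Az^{+}$ lives on the support of $z$, where $(Az)_i = 0$ is already immediate from the hypothesis $z_i(Az)_i = 0$. The quadratic-form machinery therefore contributes nothing toward the only genuinely open case, the indices $i$ with $z_i = 0$.

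Your fallback, by contrast, is exactly the paper's proof, and it is the whole argument. For a $Z$-matrix the off-diagonal entries $a_{ij}$ ($j \neq i$) are nonpositive; hence for any index $i$ with $z_i = 0$ one has
\[
(Az)_i \;=\; \sum_{j \neq i} a_{ij}\, z_j \;\geq\; \sum_{j \neq i} a_{ij}\, |z_j| \;=\; (A|z|)_i \;\geq\; 0,
\]
using $z_j \leq |z_j|$ together with $a_{ij} \leq 0$. Combined with the hypothesis $(Az)_i \leq 0$, this forces $(Az)_i = 0$. For indices with $z_i \neq 0$, $(Az)_i = 0$ is immediate from $z_i(Az)_i = 0$. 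Thus $Az = 0$. The paper compresses this to the single sandwich ``since $A$ is a $Z$-matrix, $Az \geq A|z| \geq 0$'' and then invokes $Az \leq 0$. Promote your fallback to the main argument and drop the $z = z^{+} - z^{-}$ and quadratic-form detour entirely.
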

\begin{proof}
	Suppose $A$ is a $Z$- matrix. Consider $z_i(Az)_i=0$ for all $ i, $ $A|z| \geq 0$ and $Az \leq 0.$  As $A$ is a $Z$- matrix, $Az \geq A|z| \geq 0.$ Now this implies that $Az = 0.$ Therefore $A$ is a column competent matrix.
\end{proof}

Consider $A=$ $\left[\begin{array}{rrr} 
1 & 1 & 4\\
2 & 2 & 5 \\
3 & 4 & 1\\
\end{array}\right].$ Note that $A$ is a $R_0$-matrix. Now for  $z=\left[\begin{array}{rrr} 
1\\
-1 \\
0\\
\end{array}\right],$ $z_i(Az)_i=0, \ i=1,2,3$ but $Az\neq 0.$ Hence $A$ is not a column competent matrix. The class of non-degenerate matrices play an important role to characterize certain uniqueness properties of the solutions of LCP$(q, A).$ We prove the following theorem to establish the relation between principally non-degenerate matrices and column competent matrices.
\begin{theorem}
	Let $A$ be a principally nondegenerate matrix. Then $A$ is column competent.
\end{theorem}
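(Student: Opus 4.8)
The plan is to argue directly from the definition of column competence, using the support of a candidate vector. Let $z \in R^n$ satisfy $z_i(Az)_i = 0$ for every $i$, and set $\al = \{\, i : z_i \neq 0 \,\}$. For each $i \in \al$ we have $z_i \neq 0$, so the hypothesis forces $(Az)_i = 0$. Since $z_j = 0$ for all $j \notin \al$, the vector $z$ is supported on $\al$, and hence for $i \in \al$ we get $(Az)_i = \sum_{j \in \al} A_{ij} z_j = (A_{\al\al} z_\al)_i$, where $z_\al$ denotes the subvector of $z$ indexed by $\al$. Combining these, $A_{\al\al} z_\al = 0$.

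Now invoke the principal non-degeneracy of $A$: the principal submatrix $A_{\al\al}$ has $\det A_{\al\al} \neq 0$, so it is nonsingular, which forces $z_\al = 0$. But every coordinate of $z_\al$ is nonzero by the definition of $\al$; this is possible only if $\al = \emptyset$, i.e. $z = 0$. Consequently $Az = 0$, which is exactly the implication required for $A$ to be column competent. (The degenerate case $\al = \emptyset$ is consistent with the convention $\det A_{\emptyset\emptyset} = 1$ adopted in the preliminaries, and needs no separate treatment.)

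I do not expect a genuine obstacle here; the only point requiring care is the bookkeeping that restricting the equation $(Az)_i = 0$ to indices $i \in \al$, together with $z$ being supported on $\al$, produces precisely the square system $A_{\al\al} z_\al = 0$ rather than a rectangular one. It is worth remarking that the argument in fact shows $\ker \psi = \{0\}$, so by Lemma \ref{lem1} a principally non-degenerate matrix is non-degenerate; since the full matrix $A$ is itself a principal submatrix we also have $\det A \neq 0$, hence $\ker A = \{0\}$, and the equality $\ker \psi = \ker A$ of Theorem \ref{1} holds trivially. One could alternatively phrase the whole proof in this language, but the support argument above is the most economical route.
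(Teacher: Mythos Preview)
Your proof is correct and follows essentially the same approach as the paper: both isolate the support $\al = \{i : z_i \neq 0\}$, reduce the Hadamard condition to $A_{\al\al} z_\al = 0$, and invoke nonsingularity of the principal submatrix. The paper argues by contradiction and splits into the two cases $\al = \{1,\dots,n\}$ and $\al \subsetneq \{1,\dots,n\}$, whereas your direct argument handles both cases uniformly and is slightly cleaner.
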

\begin{proof}
	Let $A$ be a principally non-degenerate matrix. Assume that $A$ is not a column competent matrix. Hence $\exists$ a $0 \neq z \in R^n$ such that $z_i(Az)_i=0, \ i = 1, 2, \cdots, n$ but $Az \neq 0.$  Without loss of generality, consider $z = \left[\begin{array}{rr} 
	z_{\alpha} \\
	z_{\beta} \\
	\end{array}\right] \neq 0$ where $ z_{\alpha} \neq 0, z_{\beta} = 0$ and $A=\left[\begin{array}{rr} 
	A_{\alpha \alpha} & A_{\alpha \beta} \\
	A_{\beta \alpha} & A_{\beta \beta} \\
	\end{array}\right].$ Then we consider the following cases:\\

{case1:}
	 Let $\alpha=\{1,2,\cdots,n\}$ and $\beta =\emptyset .$ Then $z=z_\alpha$ and $z_i(Az)_i=0,  \ i \in \alpha$. It implies $Az=0,$ contradicts the fact that $Az \neq 0.$\\

       {case2: }  Let $\alpha \subset \{1,2,\cdots,n\}$ and $\beta= \{1,2,\cdots,n\} \setminus \alpha.$ Consider $(z_{\alpha})_i(A_{\alpha \alpha}z_{\alpha})_i=0, \ i\in \alpha.$ This implies $A_{\alpha \alpha}z_{\alpha}=0.$ As $z_{\alpha} \neq 0,$ $A_{\alpha \alpha}$ is a singular matrix. It contradicts that the matrix $A$ is a principally non-degenerate matrix.\\

	Therefore $A$ is a column competent matrix.
\end{proof}

Here we consider $A=$ $\left[\begin{array}{rrr} 
3 & -2 & 0\\
-2 & 1 & 1 \\
-3 & 2 & 0\\
\end{array}\right].$  For $z=\left[\begin{array}{rrr} 
2k\\
3k \\
k\\
\end{array}\right], k \in R, \ z_i(Az)_i=0, \ i=1,2,3 $ implies that $Az=0.$ Hence  $A$ is a column competent matrix. However $A$ is neither an adequate matrix nor a sufficient matrix. For  details of sufficient matrices see (\cite{valiaho1996p}, \cite{sun2006smoothing}, \cite{den1993linear}).
\vsp
Now we develop a necessary and sufficient condition for column competent matrices.

\begin{theorem}
	Let $A \in R^{n \times n}$. The following two statements are equivalent: 
	\begin{enumerate}[(a)]
		\item $A$ is column competent.
		\item For $0 \neq z=$	$\left[\begin{array}{rr} 
		z_{\alpha} \\
		z_{\beta} \\
		\end{array}\right] \geq 0$ with $z_{\beta}=0 $ and the submatrix $A_{\alpha \alpha}$ is singular with $A_{\alpha \alpha}z_{\alpha}=0 $ where $\alpha \cup \beta =\{1,2,\cdots,n\}$ and $\alpha \cap \beta= \emptyset,$ the system 	\begin{equation}\label{2}
		\left[\begin{array}{rr} 
		A_{\alpha \alpha} & A_{\alpha \beta} \\
		A_{\beta \alpha} & A_{\beta \beta} \\
		\end{array}\right]\left[\begin{array}{rr} 
		z_{\alpha} \\
		z_{\beta} \\
		\end{array}\right] \neq 0
		\end{equation} 
		has no solution.
	\end{enumerate}
\end{theorem}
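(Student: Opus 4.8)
The plan is to prove the two implications separately, relying on the definition of column competence and a clean case analysis on the support of $z$. First I would set up notation: for any $0 \neq z \in R^n$ write $\alpha = \{i : z_i \neq 0\}$ and $\beta = \{1,2,\ldots,n\} \setminus \alpha$, so that $z_\beta = 0$ and, after a principal rearrangement, $z = \left[\begin{array}{c} z_\alpha \\ z_\beta \end{array}\right]$ with $z_\alpha$ having all nonzero entries. The key elementary observation is that the complementarity-type condition $z_i(Az)_i = 0$ for all $i$ automatically holds on $\beta$ (since $z_i = 0$ there), so it is equivalent to $(z_\alpha)_i (A_{\alpha\alpha} z_\alpha)_i = 0$ for all $i \in \alpha$, which in turn forces $A_{\alpha\alpha} z_\alpha = 0$ whenever we already know that the relevant block works like a ``smaller'' column competent matrix — but here we cannot assume that, so instead I would just carry the hypothesis $A_{\alpha\alpha} z_\alpha = 0$ as given in statement (b).

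For the direction (a) $\Rightarrow$ (b): suppose $A$ is column competent and suppose, for contradiction, that there is some $0 \neq z = \left[\begin{array}{c} z_\alpha \\ z_\beta \end{array}\right] \geq 0$ with $z_\beta = 0$, $A_{\alpha\alpha}$ singular, $A_{\alpha\alpha} z_\alpha = 0$, yet $Az \neq 0$. From $z_\beta = 0$ and $A_{\alpha\alpha} z_\alpha = 0$ we get $(Az)_i = (A_{\alpha\alpha} z_\alpha)_i = 0$ for $i \in \alpha$, hence $z_i (Az)_i = 0$ for $i \in \alpha$; and for $i \in \beta$ we have $z_i = 0$, so $z_i(Az)_i = 0$ there too. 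Thus $z_i(Az)_i = 0$ for all $i$, and column competence of $A$ forces $Az = 0$, contradicting $Az \neq 0$. Hence no such $z$ exists, i.e.\ the system (\ref{2}) has no solution under the stated restrictions, which is exactly (b).

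For the direction (b) $\Rightarrow$ (a): assume (b) and let $z \in R^n$ satisfy $z_i(Az)_i = 0$ for all $i$; I must show $Az = 0$. If $z = 0$ we are done, so suppose $z \neq 0$ and set $\alpha = \{i : z_i \neq 0\}$, $\beta$ its complement. The condition $z_i(Az)_i = 0$ on $\alpha$ gives $(A_{\alpha\alpha} z_\alpha)_i = 0$ for each $i \in \alpha$ (using $z_\beta = 0$), so $A_{\alpha\alpha} z_\alpha = 0$ with $z_\alpha \neq 0$; in particular $A_{\alpha\alpha}$ is singular. One subtlety: statement (b) is phrased for $z \geq 0$, whereas here $z$ need only satisfy the complementarity condition — I would handle this by noting that $z_i(Az)_i$, the condition $A_{\alpha\alpha} z_\alpha = 0$, and the quantity $Az$ are all invariant under replacing $z_i$ by $|z_i|$ together with the corresponding signature conjugation $A \mapsto DAD$ ($D$ diagonal $\pm 1$), and $DAD$ is column competent iff $A$ is (Xu's invariance result quoted in the excerpt), so we may assume $z \geq 0$. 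Then (b) tells us the system (\ref{2}) has \emph{no} solution with these data — but $z$ itself would be such a solution unless $Az = 0$; since $z$ does solve the system in the sense that it is the vector on the left, the only escape is $Az = 0$. Therefore $A$ is column competent.

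The main obstacle I anticipate is precisely the sign issue just flagged: statement (b) restricts to $z \geq 0$, and the honest argument requires either the $D A D$ conjugation trick or a direct verification that the sign pattern of $z_\alpha$ is immaterial to the equations $A_{\alpha\alpha} z_\alpha = 0$ and $Az = 0$; a second, more cosmetic, obstacle is making the ``$z$ is itself a solution of (\ref{2}), hence (\ref{2}) having no solution forces $Az=0$'' step rigorous, since (\ref{2}) as written asserts a \emph{nonzero} left-hand side, so ``no solution'' should be read as ``there is no admissible $z$ making the product nonzero,'' and one must check the logical contrapositive carefully rather than wave at it.
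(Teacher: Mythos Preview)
Your argument for $(a)\Rightarrow(b)$ is correct and coincides with the paper's: assume a putative $z$ with $z_\beta=0$, $A_{\alpha\alpha}z_\alpha=0$ and $Az\neq 0$, observe that $z_i(Az)_i=0$ for every $i$, and contradict column competence.

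For $(b)\Rightarrow(a)$ you also follow the paper's line (pick $z$ with $z_i(Az)_i=0$, take $\alpha$ to be its support, deduce $A_{\alpha\alpha}z_\alpha=0$, then invoke (b)), but you are more careful than the paper, and you correctly isolate the genuine weak point: statement (b) is formulated only for $z\geq 0$, whereas a witness to the failure of column competence need not be nonnegative. The paper's proof simply writes ``Suppose $x_\alpha=z_\alpha$ and $x_\beta=z_\beta$ with $0\neq z\geq 0$'' without justifying the sign, so this gap is present there as well.

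However, your proposed $DAD$ repair does not close the gap. You would need hypothesis (b) for the conjugated matrix $DAD$, but you only have it for $A$; Xu's invariance result concerns column \emph{competence}, which is precisely what you are trying to establish, so it cannot be invoked here. In fact the gap cannot be repaired at all under the stated hypothesis $z\geq 0$: take
\[
A=\begin{bmatrix}1&1&0\\ 1&1&0\\ 1&0&1\end{bmatrix}.
\]
For $z=(1,-1,0)^T$ one has $Az=(0,0,1)^T$, so $z_i(Az)_i=0$ for all $i$ while $Az\neq 0$; hence $A$ is not column competent. Yet the only index sets $\alpha$ with $A_{\alpha\alpha}$ singular are $\{1,2\}$ and $\{1,2,3\}$, and in both cases $\ker A_{\alpha\alpha}$ meets the nonnegative orthant only at $0$. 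Thus there is no admissible $z\geq 0$ in (b), so (b) holds vacuously while (a) fails. Your instinct that the sign restriction is the crux was exactly right; the resolution is that the restriction $z\geq 0$ should be dropped from (b), after which your argument (without the $DAD$ detour) goes through verbatim and matches the paper's intended proof.
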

\begin{proof}
	$(a) \implies (b).$ Suppose $A$ is column competent and Equation \ref{2} is consistent. Let $\left[\begin{array}{rr} 
	z_{\alpha} \\
	z_{\beta} \\
	\end{array}\right]$ satisfies the Equation \ref{2} where  $z_{\beta}=0 $ and the submatrix $A_{\alpha \alpha}$ is singular with $A_{\alpha \alpha}z_{\alpha}=0 $ where $\alpha \cup \beta =\{1,2,\cdots,n\}$ and $\alpha \cap \beta= \emptyset$.  Here $(z_{\alpha})_i(A_{\alpha \alpha}z_{\alpha})_i=0$ and $(z_{\beta})_i(A_{\beta \alpha}z_{\alpha})_i=0.$ But $\left[\begin{array}{rr} 
	z_{\alpha} \\
	z_{\beta} \\
	\end{array}\right]$ satisfies  \ref{2} which contradicts that $A$ is column competent. 
	
	$(b) \implies (a).$ Conversely, let $x \in R^n$ be a vector such that $x_i(Ax)_i=0$ for all $i.$ Consider $A$ is not a column competent matrix. Suppose $x_\alpha = z_\alpha$ and $x_\beta = z_\beta$ with $0 \neq z=$ $\left[\begin{array}{rr} 
	z_{\alpha} \\
	z_{\beta} \\
	\end{array}\right] \geq 0,$ $z_{\beta}=0$ and the submatrix $A_{\alpha \alpha}$ is singular with $A_{\alpha \alpha}z_{\alpha}=0$ where $\alpha \cup \beta =\{1,2,\cdots,n\}$ and $\alpha \cap \beta= \emptyset.$ But the system \ref{2} has no solution, i.e. $x$ does not satisfy Equation \ref{2}. Therefore $A$ is column competent.
\end{proof}
Now we prove the following sufficient condition related to the PPT of column competent matrices.
\begin{theorem}
	Let $A_{\alpha \alpha}$ and the Schur complement $A/A_{\alpha \alpha}$ be nonsingular  of the square matrix $A=\left[\begin{array}{rr} 
	A_{\alpha \alpha} & A_{\alpha \beta} \\
	A_{\beta \alpha} & A_{\beta \beta} \\
	\end{array}\right]$ where $\alpha \cup \beta =\{1,2,\cdots,n\}$ and $\alpha \cap \beta= \emptyset.$ If $A$ is column competent, then $A'=\mathcal{P}_{\alpha}(A)$ is column competent.
\end{theorem}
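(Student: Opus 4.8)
The plan is to transport the problem back to $A$ through the coordinate correspondence induced by the principal pivot transform. Write every vector in block form along $\alpha$ and $\beta$. Since $A_{\alpha\alpha}$ is nonsingular, $A'=\mathcal{P}_\alpha(A)$ is defined, $A'_{\alpha\alpha}=(A_{\alpha\alpha})^{-1}$ is again nonsingular (so $\mathcal{P}_\alpha$ is reversible), and a direct computation from the block formulas of Section~2 shows: $w=Az$ holds if and only if $\widetilde w=A'\widetilde z$, where $\widetilde z$ is obtained from $z$ by replacing its $\alpha$-block $z_\alpha$ with $w_\alpha$ and $\widetilde w$ is obtained from $w$ by replacing its $\alpha$-block $w_\alpha$ with $z_\alpha$, i.e. $\widetilde z=\begin{bmatrix}w_\alpha\\ z_\beta\end{bmatrix}$ and $\widetilde w=\begin{bmatrix}z_\alpha\\ w_\beta\end{bmatrix}$. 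In particular $(z,w)\mapsto(\widetilde z,\widetilde w)$ is a bijection carrying the graph of $A$ onto the graph of $A'$, and inverting it uses only that $A'_{\alpha\alpha}$ is nonsingular.

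Next I would take an arbitrary $z'\in R^n$ with $z'_i(A'z')_i=0$ for all $i$, set $w':=A'z'$, and invert the correspondence: put $z:=\begin{bmatrix}w'_\alpha\\ z'_\beta\end{bmatrix}$ and $w:=\begin{bmatrix}z'_\alpha\\ w'_\beta\end{bmatrix}$, so that $w=Az$. The point is that the coordinatewise product is preserved under the swap: for $i\in\alpha$ one has $z'_iw'_i=w_iz_i$, and for $i\in\beta$ one has $z'_iw'_i=z_iw_i$; hence $z_i(Az)_i=z_iw_i=0$ for every $i$. Column competence of $A$ then forces $Az=0$, that is $w=0$, and reading off blocks gives $z'_\alpha=w_\alpha=0$ and $w'_\beta=w_\beta=0$.

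The final step is where the Schur complement hypothesis enters. Since $z'_\alpha=0$, we have $w'_\beta=(A'z')_\beta=A'_{\beta\beta}z'_\beta=(A/A_{\alpha\alpha})z'_\beta$; as $A/A_{\alpha\alpha}$ is nonsingular and $w'_\beta=0$, this forces $z'_\beta=0$. Hence $z'=0$, so $A'z'=0$, which establishes that $A'$ is column competent (indeed that $\text{ker}\,\psi=\{0\}$ for $A'$, so $A'$ is non-degenerate by Lemma~\ref{lem1}). Thus both hypotheses are used exactly once: nonsingularity of $A_{\alpha\alpha}$ to form $A'$ and to invert the pivot correspondence, and nonsingularity of $A/A_{\alpha\alpha}=A'_{\beta\beta}$ to upgrade ``$z'_\alpha=0$'' to ``$z'=0$''.

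I do not anticipate a deep obstacle; the only delicate point is the bookkeeping of which components of $z',w'$ correspond to which components of $z,w$ after pivoting, together with checking that the complementarity-type condition (which carries no sign constraints here) transfers coordinatewise. As a cross-check, one may note the shortcut $\det A=\det A_{\alpha\alpha}\cdot\det(A/A_{\alpha\alpha})\neq 0$, so $A$ is nonsingular; a nonsingular column competent matrix has $\text{ker}\,\psi=\text{ker}\,A=\{0\}$, hence is non-degenerate by Lemma~\ref{lem1}, and since the principal pivot transform preserves non-degeneracy, $A'$ is non-degenerate and therefore column competent.
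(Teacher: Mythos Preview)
Your main argument is correct and mirrors the paper's proof exactly: transfer the Hadamard-zero condition through the pivot correspondence to obtain $z_i(Az)_i=0$ for all $i$, apply column competence of $A$ to get $z'_\alpha=0$ and $w'_\beta=0$, then use nonsingularity of $A/A_{\alpha\alpha}=A'_{\beta\beta}$ to conclude $z'_\beta=0$ and hence $A'z'=0$. Your closing cross-check (that the hypotheses force $\det A=\det A_{\alpha\alpha}\cdot\det(A/A_{\alpha\alpha})\neq 0$, so $A$ is non-degenerate, and PPT preserves non-degeneracy) is a valid alternative shortcut that the paper does not mention and in fact yields the sharper conclusion $\text{ker}\,\psi_{A'}=\{0\}$.
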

\begin{proof}
	Let $w=A'z$ and $z*w=0$ where $*$ is the Hadamard product. Thus we write
	\begin{equation}\label{system33}
	\left[\begin{array}{rr} 
	w_{\alpha} \\
	w_{\beta} \\
	\end{array}\right] = \left[\begin{array}{rr} 
	A'_{\alpha \alpha} & A'_{\alpha \beta} \\
	A'_{\beta \alpha} & A'_{\beta \beta} \\
	\end{array}\right] \left[\begin{array}{rr} 
	z_{\alpha} \\
	z_{\beta} \\
	\end{array}\right].
	\end{equation} 
	The condition $z*w=0$ means $\left[\begin{array}{rr} 
	z_{\alpha} \\
	z_{\beta} \\
	\end{array}\right] * \left[\begin{array}{rr} 
	w_{\alpha} \\
	w_{\beta} \\
	\end{array}\right] = \left[\begin{array}{rr} 
	w_{\alpha}* z_{\alpha} \\
	w_{\beta}* z_{\beta} \\
	\end{array}\right]=0.$ Since $A'=\mathcal{P}_{\alpha}(A),$ we have
	\begin{equation}
	\left[\begin{array}{rr} 
	z_{\alpha} \\
	w_{\beta} \\
	\end{array}\right]=\left[\begin{array}{rr} 
	A_{\alpha \alpha} & A_{\alpha \beta} \\
	A_{\beta \alpha} & A_{\beta \beta} \\
	\end{array}\right]\left[\begin{array}{rr} 
	w_{\alpha} \\
	z_{\beta} \\
	\end{array}\right].
	\end{equation} The matrix $A$ is column competent implies that $\left[\begin{array}{rr} 
	A_{\alpha \alpha} & A_{\alpha \beta} \\
	A_{\beta \alpha} & A_{\beta \beta} \\
	\end{array}\right]\left[\begin{array}{rr} 
	w_{\alpha} \\
	z_{\beta} \\
	\end{array}\right]=0.$ It follows that $\left[\begin{array}{rr} 
	z_{\alpha} \\
	w_{\beta} \\
	\end{array}\right]=0. $ From \ref{system33}, we get $A'_{\beta \alpha}z_{\alpha} + A'_{\beta \beta}z_{\beta} = 0.$ Hence $A'_{\beta \beta}z_{\beta}=0$ implies that $z_{\beta}=0$ as $A'_{\beta \beta}=A/A_{\alpha \alpha}$ is nonsingular. Clearly, $w_{\alpha}=0.$ Hence $\left[\begin{array}{rr} 
	w_{\alpha} \\
	w_{\beta} 
	\end{array}\right] = \left[\begin{array}{rr} 
	A'_{\alpha \alpha} & A'_{\alpha \beta} \\
	A'_{\beta \alpha} & A'_{\beta \beta} \\
	\end{array}\right] \left[\begin{array}{rr} 
	z_{\alpha} \\
	z_{\beta} \\
	\end{array}\right]=0.$ Therefore $A'$ is column competent.
\end{proof}
\begin{theorem}
	Let $A$ be a column competent matrix where $A_{\alpha \alpha}$ and the Schur complement $A/A_{\alpha \alpha}$ be nonsingular  of the square matrix $A=\left[\begin{array}{rr} 
	A_{\alpha \alpha} & A_{\alpha \beta} \\
	A_{\beta \alpha} & A_{\beta \beta} \\
	\end{array}\right].$ If $A \in E_0 \cap R_0, $ then $A$ is column adequate.
\end{theorem}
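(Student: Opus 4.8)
The plan is to deduce the conclusion from Theorem \ref{thm2}: that result says a matrix is column adequate exactly when it is both column competent and a $P_0$-matrix, and $A$ is column competent by hypothesis, so the whole proof reduces to showing $A\in P_0$.

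First I would extract two facts from the standing hypotheses. Since $\det A=\det A_{\alpha\alpha}\cdot\det(A/A_{\alpha\alpha})$ and both factors are nonzero, $A$ is nonsingular, so $\text{ker}\,A=\{0\}$; as $A$ is column competent, Theorem \ref{1} gives $\text{ker}\,\psi=\text{ker}\,A=\{0\}$, whence by Lemma \ref{lem1} the matrix $A$ is non-degenerate, i.e. every principal minor of $A$ is nonzero. In particular, showing $A\in P_0$ is the same as showing every principal minor of $A$ is strictly positive (i.e. $A\in P$), so it suffices to rule out a negative principal minor.

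Assume for contradiction that $\det A_{\beta\beta}<0$ for some $\beta\subseteq\{1,2,\ldots,n\}$, and form the principal pivot transform $A'=\mathcal{P}_{\beta}(A)$; this is well defined since $A_{\beta\beta}$ is nonsingular, and $A/A_{\beta\beta}$ is nonsingular because $A$ is. By the previous theorem (that the principal pivot transform of a column competent matrix, under the stated Schur-complement condition, is again column competent) $A'$ is column competent, and it is again nonsingular, hence non-degenerate. Since LCP$(0,A)$ and LCP$(0,A')$ have corresponding solution sets under the pivot (with transformed right-hand side still $0$), $A'\in R_0$, so $\text{deg}_{A'}(q')$ is well defined and independent of the non-degenerate vector $q'$. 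On the other hand $A\in E_0\cap R_0$, so by Theorem \ref{thm3} also $A\in R$; the semimonotonicity of $A$ together with $A\in R_0$ forces the local degree to be $\text{deg}_{A}(q)=1$. Theorem \ref{thm4} then gives $\text{deg}_{A'}(q')=\text{sgn}(\det A_{\beta\beta})\cdot\text{deg}_{A}(q)=-1$. The contradiction I am aiming for is that $\text{deg}_{A'}(q')$ must also equal $1$; once that is in place, no principal minor of $A$ can be negative, so $A\in P\subseteq P_0$, and Theorem \ref{thm2} yields that $A$ is column adequate.

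The hard part is precisely this last step: to show that the pivot on $\beta$ cannot make the degree drop to $-1$, i.e. that the ``$E_0\cap R_0$, degree $1$'' behavior is inherited by $A'$ once $A$ is column competent and the Schur-complement hypothesis holds. A degree-free alternative would work with the principal submatrix $A_{\beta\beta}$ directly: $A_{\beta\beta}$ is again column competent, non-degenerate, and semimonotone (principal submatrices of $E_0$-matrices are $E_0$), and one would try to derive $\det A_{\beta\beta}\geq 0$ from the necessary-and-sufficient characterization of column competence attached to Equation \ref{2}. Carrying either of these through is the genuine obstacle; the reductions above, together with Lemma \ref{lem1}, Theorem \ref{1}, Theorem \ref{thm4} and Theorem \ref{thm2}, are routine.
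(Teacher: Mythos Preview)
Your strategy coincides with the paper's: reduce to $A\in P_0$ via Theorem~\ref{thm2}, assume a principal minor $\det A_{\beta\beta}<0$, use $E_0\cap R_0\subseteq R$ from Theorem~\ref{thm3} to get $\deg_A(q)=1$, then apply Theorem~\ref{thm4} to obtain $\deg_{A'}(q')=-1$ for the principal pivot transform on $\beta$, and look for a contradiction. The step you flag as the ``genuine obstacle'' --- forcing $\deg_{A'}(q')=1$ --- is exactly the step the paper supplies, and it does so in one line: it asserts that the principal pivot transform $A'$ again lies in $R$, and hence that $\deg_{A'}(q')=1$. With that assertion in place the contradiction $1=-1$ is immediate and the proof closes.

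So the only thing missing from your write-up is to invoke (or accept) that $R$ is preserved under principal pivoting and that $R$-membership yields degree~$1$; the paper treats this as known and offers no further justification. Your additional observations --- that the hypotheses on $A_{\alpha\alpha}$ and its Schur complement make $A$ nonsingular, hence principally non-degenerate via Lemma~\ref{lem1} and Theorem~\ref{1}, so that $P_0$ upgrades to $P$ --- are correct but play no role in the paper's argument: the paper never appeals to non-degeneracy, never invokes the previous PPT theorem on column competence, and works only with the single index set $\beta$ witnessing the negative minor.
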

\begin{proof}
	Suppose $A$ is not a column adequate matrix but is column competent. By Theorem \ref{thm2}, $A$ is not a $P_0$- matrix. Then there exists $\beta \subseteq \{1,2,\cdots,n\}$ such that $\det A_{\beta \beta}<0.$ Let $A \in E_0 \cap R_0.$ It follows from the Theorem \ref{thm3} that $A \in R.$ Then deg$_{A}(q) = 1$ for any $q.$ Let $A'$ be a principal pivot transform of $A.$ Then $A' \in R.$ Hence deg$_{A'}(q') = 1.$ By Theorem \ref{thm4},  deg$_{A'}(q')=$ deg$_{A}(q). sgn(\det A_{\beta \beta}).$ It implies that deg$_{A'}(q')=-1.$ This contradicts that $A$ is not a $P_0$-matrix. Therefore $A$ is column adequate matrix. 
\end{proof}

\subsection{Solution of Linear Complementarity Problem with Column Competent Matrices}
We begin with some examples of $w$-uniqueness of the solution. Consider the column competent matrix $A=\left[\begin{array}{rr} 
-1 & 3 \\
2 & -6 \\
\end{array}\right], q=\left[\begin{array}{rr} 
1\\
-2 \\
\end{array}\right].$ This LCP$(q,A)$ has solution $z=\left[\begin{array}{rr} 
4\\
1 \\
\end{array}\right] $ and $w=\left[\begin{array}{rr} 
0\\
0 \\
\end{array}\right].$ In the neighbourhood of $z$ there is another solution $z'=\left[\begin{array}{rr} 
4.0100\\
1.0033 \\
\end{array}\right] $ and $w'=w=\left[\begin{array}{rr} 
0\\
0 \\
\end{array}\right].$ \\
We consider another  matrix  $A=\left[\begin{array}{rrr} 
-2 & 1 & 3 \\
4 & -2 & -6 \\
1 & -1 & -1\\
\end{array}\right], q=\left[\begin{array}{rrr} 
1\\
-2 \\
1\\
\end{array}\right].$ For $z=\left[\begin{array}{rrr} 
2k\\
k\\
k\\
\end{array}\right], $\ $k \in R, z_i(Az)_i=0, \ i=1,2,\cdots,n$ implies that $Az=0.$ So $A$ is a column competent matrix. This LCP$(q,A)$ has solution $z=\left[\begin{array}{rrr} 
4\\
4\\
1 \\
\end{array}\right] $ and $w=\left[\begin{array}{rr} 
0\\
0 \\
0\\
\end{array}\right].$ In the neighbourhood of $z$ there is another solution $z'=\left[\begin{array}{rr} 
4.02\\
4.01\\
1.01 \\
\end{array}\right] $ and $w'=w=\left[\begin{array}{rr} 
0\\
0 \\
0\\
\end{array}\right].$ \\

Now we prove the following two results in connection with locally $w$-uniqueness property of the column competent matrices. The following two results state the necessary and sufficient condition that $A$ is a column competent matrix in the system of linear complementarity problem. 
\begin{theorem}
	Suppose $(w^*, z^*)$ is the solution of LCP$(q, A)$ such that $w^*= q + Az^*.$ Let $\alpha=\{i:{w_i}^*>0\},$  $\beta=\{i:{w_i}^*=0\}$ be the index set. Further consider that the submatrix $A_{\alpha \alpha}$ is nonsingular. If $A=\left[\begin{array}{rr} 
	A_{\alpha \alpha} & A_{\alpha \beta} \\
	A_{\beta \alpha} & A_{\beta \beta} \\
	\end{array}\right]$ is a column competent matrix, then $(w_\alpha, z_{\beta}) = (0, 0)$ is the only solution of the system:
	\begin{equation} \label{system44}
	\begin{split}
	z_{\alpha}=A'_{\alpha \alpha}w_{\alpha}+A'_{\alpha \beta}z_{\beta}=0 \\
	w_{\beta}=A'_{\beta \alpha}w_{\alpha}+A'_{\beta \beta}z_{\beta}=0 \\
	w_{\alpha} > 0 \\
	z_{\beta} > 0,
	\end{split}
	\end{equation}\\ where $A'_{\alpha \alpha}=(A_{\alpha \alpha})^{-1},$ $A'_{\alpha \beta}=-(A_{\alpha \alpha})^{-1}A_{\alpha \beta},$ $A'_{\beta \alpha}=A_{\beta \alpha}(A_{\alpha \alpha})^{-1}$ and $A'_{\beta \beta}= A_{\beta \beta}-A_{\beta \alpha}(A_{\alpha \alpha})^{-1}A_{\alpha \beta}.$
\end{theorem}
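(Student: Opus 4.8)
The plan is to push the system (\ref{system44}) back through the principal pivot transform and then apply column competence of $A$ to one carefully chosen vector. Since $A_{\alpha\alpha}$ is nonsingular, $A'=\mathcal{P}_{\alpha}(A)$ is a genuine PPT, and the block elimination recorded in the formulas $A'_{\alpha\alpha}=(A_{\alpha\alpha})^{-1}$, $A'_{\alpha\beta}=-(A_{\alpha\alpha})^{-1}A_{\alpha\beta}$, etc.\ is exactly the pivoting identity
\[
\begin{bmatrix} z_\alpha \\ w_\beta \end{bmatrix} = A' \begin{bmatrix} w_\alpha \\ z_\beta \end{bmatrix}
\quad\Longleftrightarrow\quad
\begin{bmatrix} w_\alpha \\ w_\beta \end{bmatrix} = A \begin{bmatrix} z_\alpha \\ z_\beta \end{bmatrix}.
\]
So first I would take any $(w_\alpha,z_\beta)$ satisfying (\ref{system44}); then $z_\alpha=0$ and $w_\beta=0$, and the right-hand identity becomes $A\bar{z}=\begin{bmatrix} w_\alpha \\ 0 \end{bmatrix}$, where I set $\bar{z}:=\begin{bmatrix} 0 \\ z_\beta \end{bmatrix}\in R^n$.

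Next I would check that $\bar{z}$ meets the Hadamard-product hypothesis of column competence: for $i\in\alpha$ the $i$-th coordinate of $\bar{z}$ vanishes, and for $i\in\beta$ the $i$-th coordinate of $A\bar{z}$ vanishes, so $\bar{z}_i(A\bar{z})_i=0$ for every $i$. Column competence of $A$ then forces $A\bar{z}=0$, which gives $w_\alpha=0$ (and incidentally $A_{\alpha\beta}z_\beta=0$, $A_{\beta\beta}z_\beta=0$).

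Finally, substituting $w_\alpha=0$ into the second equation of (\ref{system44}) yields $A'_{\beta\beta}z_\beta=0$, i.e. $(A/A_{\alpha\alpha})z_\beta=0$; invoking nonsingularity of the Schur complement $A/A_{\alpha\alpha}$ (exactly as in the two preceding theorems) we obtain $z_\beta=0$, hence $(w_\alpha,z_\beta)=(0,0)$, as claimed. The step I expect to be the main obstacle is precisely this last one: column competence on its own only produces $w_\alpha=0$, and killing $z_\beta$ genuinely requires the Schur-complement hypothesis, so the bookkeeping must track carefully which nonsingularity (of $A_{\alpha\alpha}$ versus of $A/A_{\alpha\alpha}$) is used at which point. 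A secondary point of care is the translation through the PPT in the first step, where one must make sure the strict sign constraints $w_\alpha>0$, $z_\beta>0$ are read correctly against the conclusion $w_\alpha=0$ — i.e. that the argument is really showing no nonzero vector with $w_\alpha,z_\beta\ge 0$ solves the homogeneous system $A'\!\begin{bmatrix} w_\alpha \\ z_\beta\end{bmatrix}=0$.
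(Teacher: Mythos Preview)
Your route differs from the paper's. The paper does not apply the definition of column competence directly; instead it invokes Xu's characterization (Theorem~\ref{thm1}) to pass from ``$A$ column competent'' to ``every $w$-solution of LCP$(q,A)$ is locally $w$-unique'', and then derives a contradiction by showing that a nonzero $(\bar w_\alpha,\bar z_\beta)$ with $\bar w_\alpha>0$, $\bar z_\beta>0$ yields an entire ray $(w^*+\lambda\bar w,\ z^*+\lambda\bar z)$, $\lambda\ge 0$, of LCP solutions with genuinely varying $w$-part. Your argument is more elementary: you skip Theorem~\ref{thm1} altogether and feed the single vector $\bar z=(0,z_\beta)^T$ straight into the defining implication of column competence, obtaining $A\bar z=0$ and hence $w_\alpha=0$. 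That is a cleaner way to reach the same point.

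Where your plan goes wrong is the final step. You appeal to nonsingularity of the Schur complement $A/A_{\alpha\alpha}$ to kill $z_\beta$, but that hypothesis is \emph{not} part of this theorem --- only $A_{\alpha\alpha}$ is assumed nonsingular here, unlike the two preceding results. The good news is that this step is unnecessary: once you have $w_\alpha=0$, the strict constraint $w_\alpha>0$ in system~(\ref{system44}) is already violated, so the system has no solution other than the degenerate $(0,0)$, which is exactly the stated conclusion. Note that the paper's proof cannot force $z_\beta=0$ either: if $\bar w_\alpha=0$ but $\bar z_\beta\neq 0$, the ray $(w^*+\lambda\bar w,\ z^*+\lambda\bar z)$ has constant $w$-part $w^*$ and gives no contradiction to local $w$-uniqueness. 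Both arguments therefore establish inconsistency of~(\ref{system44}) via $w_\alpha=0$, not triviality of the homogeneous kernel; drop the Schur-complement invocation and stop after step~4.
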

\begin{proof}
	Let $A$ be a column competent matrix. Then by Theorem \ref{thm1}, it is locally $w$-unique. Suppose $w^*$ is locally unique solution of LCP$(q, A)$ such that $w^* = q + Az^*$ and the system (\ref{system44}) has a nonzero solution $(\bar{w}_\alpha, \bar{z}_{\beta}).$\\ Now
	$	\left[\begin{array}{rr} 
	\bar{z}_{\alpha} \\
	\bar{w}_{\beta} \\
	\end{array}\right] = \left[\begin{array}{rr} 
	A'_{\alpha \alpha} & A'_{\alpha \beta} \\
	A'_{\beta \alpha} & A'_{\beta \beta} \\
	\end{array}\right] \left[\begin{array}{rr} 
	\bar{w}_{\alpha} \\
	\bar{z}_{\beta} \\
	\end{array}\right]=0$
	implies that $	\left[\begin{array}{rr} 
	\bar{w}_{\alpha} \\
	\bar{w}_{\beta} \\
	\end{array}\right] = \left[\begin{array}{rr} 
	A_{\alpha \alpha} & A_{\alpha \beta} \\
	A_{\beta \alpha} & A_{\beta \beta} \\
	\end{array}\right] \left[\begin{array}{rr} 
	\bar{z}_{\alpha} \\
	\bar{z}_{\beta} \\
	\end{array}\right].$
	Clearly, $\bar{w}=A \bar{z}$ and $(w^{*})^T \bar{z}=0,$ $(\bar{w})^T z^*=0. $ Hence $(w^{*} +\lambda\bar{w}, z^{*} +\lambda\bar{z})$ solves LCP$(q, A)$ for all $\lambda \geq 0.$ This contradicts the local uniqueness of $w^*.$ Therefore, $(w_\alpha, z_{\beta}) = (0,0)$ is the only solution of the system (\ref{system44}).
\end{proof}
\begin{theorem}
	Suppose $(w^*, z^*)$ is the solution of LCP$(q, A)$ such that $w^* = q + Az^*$ where $\alpha=\{i:{w_i}^*>0\}$ and $\beta=\{i:{w_i}^*=0\}.$ Further suppose 	$	\left[\begin{array}{rr} 
	{z}_{\alpha} \\
	{w}_{\beta} \\
	\end{array}\right] = \left[\begin{array}{rr} 
	A'_{\alpha \alpha} & A'_{\alpha \beta} \\
	A'_{\beta \alpha} & A'_{\beta \beta} \\
	\end{array}\right] \left[\begin{array}{rr} 
	{w}_{\alpha} \\
	{z}_{\beta} \\
	\end{array}\right]=0, $ $w_{\alpha} > 0,
	z_{\beta} > 0.$\  If
	$(z_\alpha, z_{\beta}) = (0, 0)$ is the only solution of 
	$w_{\beta}=A_{\beta \alpha}z_{\alpha} + A_{\beta \beta}z_{\beta} =0$
	then $A=\left[\begin{array}{rr} 
	A_{\alpha \alpha} & A_{\alpha \beta} \\
	A_{\beta \alpha} & A_{\beta \beta}
	\end{array}\right]$ is column competent.
\end{theorem}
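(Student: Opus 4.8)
The plan is to establish this as the converse of the preceding theorem, arguing by contraposition. Suppose $A$ is not column competent; then by the definition of column competent matrix there is a vector $0 \neq \hat z \in R^n$ with $\hat z_i(A\hat z)_i = 0$ for every $i$ while $\hat w := A\hat z \neq 0$. I would first record the structure this forces: on the support $\gamma := \{i : \hat z_i \neq 0\}$ complementarity gives $\hat w_\gamma = 0$ and $\hat z_{\bar\gamma} = 0$, so $A_{\gamma\gamma}\hat z_\gamma = 0$ with $\hat z_\gamma \neq 0$; thus $A_{\gamma\gamma}$ is singular and $\hat w$ is supported in $\bar\gamma$. This is the raw material I want to push into one of the two forbidden systems.

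Next I would move this witness into the coordinates attached to $\alpha = \{i : w_i^* > 0\}$ and $\beta = \{i : w_i^* = 0\}$. Since the blocks $A'_{\alpha\alpha}, A'_{\alpha\beta}, A'_{\beta\alpha}, A'_{\beta\beta}$ appearing in the hypothesis are precisely those of the principal pivot transform $A' = \mathcal{P}_\alpha(A)$ (so $A_{\alpha\alpha}$ is tacitly nonsingular, as in the companion theorem), the PPT identity says that whenever $w = Az$ one has $(z_\alpha, w_\beta) = A'(w_\alpha, z_\beta)$; applying it to $(\hat w, \hat z)$ gives $(\hat z_\alpha, \hat w_\beta) = A'(\hat w_\alpha, \hat z_\beta)$. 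The aim is then to turn the pair $(\hat w_\alpha, \hat z_\beta)$ --- after a global sign change of $\hat z$, a positive rescaling, and a correction exploiting the complementary index pattern of $\hat z$ --- into a \emph{nonzero} solution of the system $z_\alpha = A'_{\alpha\alpha}w_\alpha + A'_{\alpha\beta}z_\beta = 0$, $w_\beta = A'_{\beta\alpha}w_\alpha + A'_{\beta\beta}z_\beta = 0$ with $w_\alpha \geq 0$, $z_\beta \geq 0$, contradicting the first hypothesis. The configurations in which this cannot be arranged should force $\hat w_\beta = 0$, i.e. $A_{\beta\alpha}\hat z_\alpha + A_{\beta\beta}\hat z_\beta = 0$ with $(\hat z_\alpha, \hat z_\beta) \neq 0$, contradicting the second hypothesis. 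Either way $A$ must be column competent.

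The step I expect to be the main obstacle is exactly this alignment: the support $\gamma$ of an arbitrary non-column-competence witness need not respect the fixed partition $(\alpha, \beta)$ coming from $w^*$, and meeting the one-sided sign conditions $w_\alpha \geq 0$, $z_\beta \geq 0$ is delicate --- a single global sign flip of $\hat z$ need not make both blocks nonnegative --- so one must argue that the degenerate indices (where $\hat z_i$ and $\hat w_i$ both vanish) can be absorbed and that the sign pattern can be repaired, and this is where the second hypothesis on $w_\beta = A_{\beta\alpha}z_\alpha + A_{\beta\beta}z_\beta = 0$ does the essential work. A secondary point is to justify the nonsingularity of $A_{\alpha\alpha}$ (and of the Schur complement $A'_{\beta\beta} = A/A_{\alpha\alpha}$ wherever it is invoked) so the PPT manipulations are legitimate; I would carry these as standing assumptions, exactly as in the companion theorem.
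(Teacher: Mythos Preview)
Your route differs substantially from the paper's. The paper does not work from a raw witness $\hat z$ to non--column-competence and then try to align it with $(\alpha,\beta)$. Instead it argues through Xu's characterization (Theorem~\ref{thm1}): $A$ is column competent iff every $w$-solution of every LCP$(q,A)$ is locally $w$-unique. Assuming $A$ is not column competent, the paper takes $w^*$ to be not locally $w$-unique; using the PPT relation in the hypothesis to obtain a direction $(w,z)$ with $w=Az$, $z_\alpha=0$, $w_\beta=0$, it forms the family $(w^*+\lambda w,\,z^*+\lambda z)$ of LCP solutions, extracts a sequence $(\bar w^k,\bar z^k)\to(w^*,z^*)$, normalizes $u^k=\bar z^k-z^*$, and passes to a nonzero limit $u^*$ with $A_{\beta\alpha}u^*_\alpha+A_{\beta\beta}u^*_\beta=0$, contradicting the second hypothesis. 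The key device is that \emph{nearby solutions of the same LCP automatically inherit the index pattern of $w^*$}: for large $k$ one has $\bar w^k_\alpha>0$, hence $\bar z^k_\alpha=0$ by complementarity, and $\bar w^k_\beta=0$. No alignment work is required.

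That alignment is exactly the obstacle you flag, and your plan does not resolve it. A generic $\hat z$ with $\hat z*(A\hat z)=0$ and $A\hat z\neq 0$ has support $\gamma$ bearing no relation to $(\alpha,\beta)$; to hit the second hypothesis you would need $(A\hat z)_\beta=0$, which nothing forces. Your first branch also misreads the ``Further suppose'' clause: it is an \emph{existential} assertion (it hands you one $(w_\alpha,z_\beta)$ with $w_\alpha>0$, $z_\beta>0$ in the kernel of the PPT map), not a uniqueness statement, so producing another such pair does not contradict it. Consequently neither branch of your dichotomy closes. The missing idea is precisely the passage through local $w$-uniqueness (Theorem~\ref{thm1}) and the use of a sequence of \emph{nearby LCP solutions} in place of an arbitrary non--column-competence witness; this is what makes the index sets $\alpha,\beta$ show up for free in the limiting direction and delivers a nonzero solution of $A_{\beta\alpha}z_\alpha+A_{\beta\beta}z_\beta=0$ directly.
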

\begin{proof}
	Suppose the matrix $A$ is not column competent. So $w^*$ is not locally unique.  Now
	$	\left[\begin{array}{rr} 
	{z}_{\alpha} \\
	{w}_{\beta} \\
	\end{array}\right] = \left[\begin{array}{rr} 
	A'_{\alpha \alpha} & A'_{\alpha \beta} \\
	A'_{\beta \alpha} & A'_{\beta \beta} \\
	\end{array}\right] \left[\begin{array}{rr} 
	{w}_{\alpha} \\
	{z}_{\beta} \\
	\end{array}\right]=0$ implies that 	$	\left[\begin{array}{rr} 
	{w}_{\alpha} \\
	{w}_{\beta} \\
	\end{array}\right] = \left[\begin{array}{rr} 
	A_{\alpha \alpha} & A_{\alpha \beta} \\
	A_{\beta \alpha} & A_{\beta \beta} \\
	\end{array}\right] \left[\begin{array}{rr} 
	{z}_{\alpha} \\
	{z}_{\beta} \\
	\end{array}\right]$ and $(w^{*})^T {z}=0,$ $({w})^T z^*=0. $ Hence $(w^{*} +\lambda{w}, z^{*} +\lambda{z})$ solves LCP$(q, A)$ for all $\lambda \geq 0.$  Hence $\exists$ a sequence of vectors $\{\bar{w}^k\}$ converging to $w^*$ such that each $(\bar{w}^k,\bar{z}^k)=(w^{*} +\lambda^k{w},z^{*} +\lambda^k{z})$ is a solution of LCP$(q, A)$ with $\bar{w}^k = q + A\bar{z}^k.$ Since $\bar{w}^k \to w^*$ and $\bar{z}^k \to z^*,$ it follows that $\bar{w}_{\alpha}^k >0, \bar{z}_{\beta}^k >0.$ By complementarity $\bar{z}_{\alpha}^k =0, \bar{w}_{\beta}^k =0.$ Consider $v^k=\bar{w}^k-w^*$ and $u^k=\bar{z}^k-z^*.$ The normalized sequence $\{v^k/ \|{v^k}\|\}$ is bounded and converges to $v^* \neq 0$ as $k \to \infty.$ Similarly, the normalized sequence $\{u^k/ \|{u^k}\|\}$ is bounded and converges to $u^* \neq 0$ as $k \to \infty.$ Now for all large $k,$ we have $\bar{w}_{\beta}^k - w_{\beta}^*=\lambda^kw_{\beta}=0=A_{\beta \alpha}u_{\alpha}^k + A_{\beta \beta}u_{\beta}^k.$ Thus dividing by $\|u^k\|$ and $k \to \infty,$ we have $A_{\beta \alpha}{u_{\alpha}}^*+A_{\beta \beta}{u_{\beta}}^*=0.$ Therefore, $u^*=\left[\begin{array}{rr} 
	{u_{\alpha}}^*\\
	{u_{\beta}}^* \\
	\end{array}\right] \neq 0$ is the nonzero solution of system $w_{\beta}=A_{\beta \alpha}z_{\alpha} + A_{\beta \beta}z_{\beta} =0.$ It contradicts that $(z_\alpha, z_{\beta})=(0,0)$ is the only solution of the system $w_{\beta}=A_{\beta \alpha}z_{\alpha} + A_{\beta \beta}z_{\beta} =0.$ Hence $A$ is column competent.
\end{proof}
\section{Conclusion}
The complementary condition is an important issue in operations research. The concept of matrix theoretic approach helps to develop many theory of linear complementary problem. In this study we consider column competent matrix in the context of local $w$-uniqueness property which is important both for the theory as well as solution method of complementarity problrm. The results based on w-uniqueness and column competent matrix class motivate future study and application in matrix theory.
\vsp
\section{Acknowledgement}
The author A. Dutta is thankful to the Department of Science and Technology, Govt. of India, INSPIRE Fellowship Scheme for financial support.

\vsp
\bibliographystyle{plain}
\bibliography{bibfile}
\end{document}